\newcommand{\Zz}{\mathbb{Z}}
\newcommand{\Pp}{\mathbb{P}}
\newcommand{\Rr}{\mathbb{R}}
\newcommand{\Qq}{\mathbb{Q}}
\newcommand{\Nn}{\mathbb{N}}
\newcommand{\pet}{\operatorname{pet}}
\newcommand{\PET}{\operatorname{PET}}
\newcommand{\Supp}{\operatorname{Supp}}
\newtheorem{theorem}{Theorem}[section]
\newtheorem{lemma}[theorem]{Lemma}
\newtheorem{proposition}[theorem]{Proposition}
\newtheorem{definition}[theorem]{Definition}
\newtheorem{corollary}[theorem]{Corollary}
\newtheorem{remark}[theorem]{Remark}
\newtheorem{conjecture}[theorem]{Conjecture}
\begin{document}

\title{On accumulation points of pseudo-effective thresholds}

\begin{abstract}
We characterize a $k$-th accumulation point of pseudo-effective thresholds of $n$-dimensional varieties as certain invariant associates to a numerically trivial pair of an $(n-k)$-dimensional variety. This characterization is applied towards Fujita's log spectrum conjecture for large $k$.
\end{abstract}

\author{Jingjun Han}
\address{Beijing International Center for Mathematical Research, Peking University,
Beijing 100871, China} \email{hanjingjun@pku.edu.cn}

\curraddr{Department of Mathematics, Johns Hopkins University,
Baltimore, MD 21218, USA}\email{jhan@math.jhu.edu}

\author{Zhan Li}
\address{Department of Mathematics, Southern University of Science and Technology, 1088 Xueyuan Rd, Shenzhen 518055, China} \email{lizhan@sustech.edu.cn}

\maketitle

\tableofcontents

\section{Introduction}\label{sec: introduction}

It has long been realized that the behavior of certain invariants, such as log canonical threshold and minimal log discrepancy, relates to deep results in birational geometry. A celebrated conjecture of Shokurov \cite{Sho88, Fli92} predicts that the log canonical thresholds satisfy the ascending chain condition (ACC). This conjecture has been extensively studied before it is fully established in \cite{HMX14}. Another problem is the distribution of log canonical thresholds. It is speculated that the accumulation points of $n$-dimensional log canonical thresholds should lie in the set of $(n-1)$-dimensional log canonical thresholds. This is established in \cite{HMX14} under certain restrictions on the coefficients of boundary divisors.

\medskip

Pseudo-effective threshold is another invariant of this kind. Roughly speaking, a pseudo-effective threshold is a measurement of how far a divisor is away from being effective with respect to a given divisor. Fujita defines the (log) Kodaira energy \cite{Fuj92, Fuj96} which is nothing but the negative of the corresponding pseudo-effective threshold. From the classification perspective, smooth varieties with large pseudo-effective thresholds have been extensively studied.

\medskip

Pseudo-effective threshold can be analogized to log canonical threshold in many aspects. Fujita proposed the spectrum conjecture on pseudo-effective threshold (\cite[(3.2)]{Fuj96}) which is an analogy for ACC conjecture on log canonical thresholds. One version of this conjecture predicts that the set of pseudo-effective thresholds is an ACC set. This conjecture has been confirmed in \cite{Fuj96} for $3$-folds, in \cite{DiC16, DiC17} for arbitrary varieties, and in \cite{HL17} for more general setting. Fujita's log spectrum conjecture (Fujita attributed this to Shokurov in \cite[(3.7)]{Fuj96}) can be viewed as an analogy to the aforementioned conjecture on the accumulation points of log canonical thresholds. Notice that the term ``log spectrum conjecture'' in \cite{DiC16, HL17} is used in a different context. 

\medskip

In the terminology of this paper, Fujita's log spectrum conjecture can be stated as the following (the original conjecture is stated in terms of the Kodaira energy). Let $(X, \Delta)$ be a log smooth variety with $\Delta$ a reduced divisor. Let $M$ be an ample Cartier divisor on $X$ and $\pet(X, \Delta; M) \in \Rr_{\geq 0}$ be the pseudo-effective threshold of $K_X+\Delta$ with respect to $M$ (see Section \ref{sec: accumulation points}). Let $\PET_n$ be the set of all such $\pet(X, \Delta; M)$ with $\dim X=n$. For a subset $S \subseteq \Rr$, let $\lim^1 S$ be the set of accumulation points of $S$, and let the set of the $k$-th accumulation points be $\lim^k S = \lim^1(\lim^{k-1} S)$ for any $k \in \Nn$.

\begin{conjecture}[{Fujita's log spectrum conjecture \cite[(3.7)]{Fuj96}}]\label{conj: Fujita's log spectrum conjecture} Under the above notation, $\lim^{k}(\PET_n) \leq n-k$ for any positive integer $k\leq n$ .
\end{conjecture}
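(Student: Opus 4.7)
The plan is to reduce the conjecture to a sharp upper bound on the pseudo-effective threshold of lower-dimensional pairs via the main characterization theorem announced in the abstract. In one sentence, each $k$-th accumulation point of $\PET_n$ should be realized by data on an $(n-k)$-dimensional pair with nonempty reduced boundary, and the pet of such a pair should be bounded by $n-k$.

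The first step is to apply the characterization theorem to represent an arbitrary $\alpha \in \lim^{k}(\PET_n)$ by a triple $(Y, B_Y, M_Y)$ with $\dim Y = n-k$, $(Y, B_Y)$ log smooth and $B_Y$ a nonempty reduced divisor, $M_Y$ ample Cartier, and a numerical triviality $K_Y + B_Y + \alpha M_Y \equiv 0$; in particular $\alpha = \pet(Y, B_Y; M_Y)$. The nonemptiness of $B_Y$ should fall out of the accumulation mechanism itself, since an accumulation point must be produced by a nontrivial limiting component that descends to a reduced divisor on the base. The second step is to prove the uniform bound $\pet(Y, B_Y; M_Y) \leq \dim Y$ under these hypotheses. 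Intersecting the numerical triviality with $M_Y^{\dim Y - 1}$ yields the explicit formula $\alpha = -(K_Y + B_Y) \cdot M_Y^{\dim Y - 1} / M_Y^{\dim Y}$; I would then induct on $\dim Y$, treating the minimal model case via effective non-vanishing, and the Mori fiber space case by restricting to a general fiber (where the reduced hypothesis persists on the vertical part of $B_Y$) and applying induction on the base.

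The main obstacle is combining the nonemptiness of $B_Y$ in Step 1 with the bound in Step 2. Without $B_Y \neq 0$ the bound is off by one unit, as witnessed by $\Pp^{n-k}$ with empty boundary and $M = \Oo(1)$, which has $\pet = n - k + 1 > n - k$; so the characterization must genuinely produce a nonempty reduced component, and the bound argument must use it structurally, for instance via adjunction along a component of $B_Y$. For large $k$, $\dim Y = n - k$ is small and classification of low-dimensional log Calabi-Yau pairs of Fano type should verify both points directly, which presumably accounts for the abstract's restriction to large $k$; for small $k$ the inductive argument would depend on (and may require a substantial extension of) the minimal model program in high relative dimension. A secondary concern is verifying that Step 1 outputs an honestly log smooth pair with reduced $B_Y$ and ample Cartier $M_Y$, rather than approximations thereof; if not, a limiting/approximation argument within the closed set $\lim^{k}(\PET_n)$ would be needed to regularize the data before applying the bound.
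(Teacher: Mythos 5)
There is a fundamental mismatch here: the statement you are proving is a \emph{conjecture} in this paper, and the paper explicitly says it is widely open, with no complete answer known even for surfaces. The paper does not prove it; it proves only a characterization of $k$-th accumulation points (Theorem \ref{thm: accumulation point of pet}) under DCC hypotheses on the coefficient set, plus the partial application Proposition \ref{prop: Fujita's conjecture for n-1}, which handles only $k=n-1$ and $k=n$ (with bounds $2$ and $1$ for the enlarged set $\PET_n(I)$). So your proposal cannot be judged against a proof of the conjecture in the paper; what can be judged is whether your two steps would actually go through, and they would not as stated.

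The concrete gap is in your Step 1: the characterization theorem does not deliver the data you assume. Membership in $K_{n-k}(I)$ means there is an $(n-k)$-dimensional Fano variety of Picard number $1$ carrying a numerically trivial \emph{generalized klt} pair $K_{X'}+B'+cM'\equiv 0$, where the boundary $B'$ has all coefficients strictly less than $1$ (so it is never a nonempty reduced divisor -- the klt condition forbids exactly that), and the nef part $M'$ is only the push-forward of a nef Cartier divisor from a higher model: an ample Weil, $\Qq$-Cartier divisor that need not be Cartier. Your entire Step 2 hinges on $B_Y$ reduced and nonempty and on $M_Y$ ample Cartier; with only an ample Weil $M'$ the intersection-theoretic bound $\alpha=-(K_Y+B_Y)\cdot M_Y^{\dim Y-1}/M_Y^{\dim Y}\leq \dim Y$ fails, because the Cartier index (equivalently $M_Y^{\dim Y}$ relative to $-K_Y\cdot M_Y^{\dim Y-1}$) is unbounded over the relevant family, and the paper's closing Remark states precisely this obstruction: when $M'\not\equiv 0$ one only knows it is ample Weil, ``thus, there is no upper bound for $c$.'' This is exactly why the paper can conclude only for $n-k\leq 1$, where the alternative branch of the characterization (a coefficient of the form $\frac{m-1+f+kc}{m}<1$, forcing $c<1$) or the curve case ($M'$ automatically Cartier, giving $c\leq 2$) suffices, and why the general case is deferred to \cite{Li18} under the additional assumption that $M$ is ample Cartier in the definition of $\PET_n(I)$. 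Your hope that ``nonemptiness of $B_Y$ falls out of the accumulation mechanism'' and that the data can be ``regularized'' to a log smooth pair with reduced boundary has no support in the argument of the paper and contradicts the klt output of the characterization; without a genuinely new idea to control the Cartier index of $M'$ (boundedness-type results beyond Theorem \ref{thm: BAB} as used here), the proposal does not close the conjecture, nor even recover the paper's Proposition \ref{prop: Fujita's conjecture for n-1} by the route you describe.
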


This conjecture is widely open and it seems that no complete answer is known even for surfaces. The main result of this paper is to obtain a characterization of the $k$-th accumulation points under reasonable assumptions on the coefficients of the boundary divisors.

\medskip

Let $I \subseteq [0,1]$ be a subset, then $I$ is called a DCC set if it satisfies the descending chain condition. Let $I_+ \coloneqq \{\sum n_i a_i \leq 1 \mid a_i \in I, n_i \in \Nn\} \cup\{0\}$, and
\begin{equation}\label{eq: PET_n(I)}
\begin{split}
\PET_n(I) = \{ & \pet(X, \Delta; M) \mid (X, \Delta) \text{~is lc}, \text{~coefficients of }\Delta \text{~are in~} I, \\
&M \text{~is a nef and big Cartier divisor}, \dim X=n\}.
\end{split}
\end{equation}

\begin{theorem}\label{thm: accumulation point of pet}
Let $I$ be a DCC set such that $I=I_+$. Assume that $1\in I$ with $1$ the only possible accumulation point, then for any $1 \leq k \leq n$, $\lim^k (\PET_n(I)) \subseteq K_{n-k}(I)$.
\end{theorem}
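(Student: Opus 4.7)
The plan is to induct on $k$, with base case $k = 1$. Take a sequence $t_j = \pet(X_j, \Delta_j; M_j)$ of distinct elements of $\PET_n(I)$ converging to an accumulation point $t$. At the threshold, $K_{X_j} + \Delta_j + t_j M_j$ is pseudo-effective but sits on the boundary of the pseudo-effective cone, hence is not big. I would run a $(K_{X_j} + \Delta_j)$-MMP with scaling of $M_j$ to produce a birational model $X_j'$ admitting a contraction $f_j : X_j' \to Y_j$ with $\dim Y_j \leq n - 1$, on which $K_{X_j'} + \Delta_j' + t_j M_j' \equiv f_j^*(t_j H_j)$ for some ample Cartier divisor $H_j$ on $Y_j$. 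This step relies on termination results for MMP with scaling in the non-big pseudo-effective regime.

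Next I would descend along $f_j$ via Kawamata's canonical bundle formula, in its generalized pair form, to produce a generalized log pair $(Y_j, B_j + \mathbf{M}_j)$ satisfying $K_{Y_j} + B_j + \mathbf{M}_j \equiv t_j H_j$. The hypotheses $I = I_+$ and that $1$ is the only accumulation point of $I$, combined with the global ACC for numerically trivial pairs (as in \cite{HMX14} and its generalizations), force the coefficients of $B_j$ to lie in a DCC set $J(I)$ depending only on $I$, and the Cartier index of the nef part $\mathbf{M}_j$ to be bounded. Passing to a subsequential limit, $t$ is realized as the invariant of a numerically trivial generalized pair of dimension $n - 1$, hence $t \in K_{n-1}(I)$.

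For the inductive step $k \geq 2$, I would establish the parallel accumulation statement $\lim^1(K_m(I)) \subseteq K_{m-1}(I)$ for the invariants of numerically trivial pairs themselves. The same MMP plus canonical bundle formula strategy applies, taking as input a numerically trivial generalized pair of dimension $m$ in place of a pseudo-effective threshold; one runs an MMP that descends to a contraction dropping dimension, and then applies the canonical bundle formula. Iterating $k$ times starting from the base case yields $\lim^k(\PET_n(I)) \subseteq K_{n-k}(I)$.

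The main obstacle is controlling coefficients and the moduli part through the canonical bundle formula so that the descended data on $Y_j$ remain in a DCC set built uniformly from $I$; this is exactly where $I = I_+$, the hypothesis on accumulation points of $I$, and global ACC for numerically trivial generalized pairs are indispensable. A secondary technical difficulty is to justify, at the limit, that the limiting object genuinely represents a point of $K_{n-k}(I)$ of the intended form rather than a weaker invariant, which requires uniform boundedness of Cartier indices of the polarizing divisors $H_j$ and a careful extraction of the subsequence using DCC on both coefficients and moduli parts.
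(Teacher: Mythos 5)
Your plan diverges from the paper at the decisive point, and the divergence creates a real gap. After the MMP the divisor $K_{X_j'}+\Delta_j'+t_j'M_j'$ is semi-ample but not big, so it defines a fibration $g_j\colon X_j'\to Y_j$ with $\dim Y_j<n$; but nothing prevents $Y_j$ from being a point, i.e.\ $K_{X_j'}+\Delta_j'+t_j'M_j'\equiv 0$ on an $n$-dimensional variety. In that case there is no $(n-1)$-dimensional base to descend to, and this numerically trivial case is precisely the hard one. Even when $\dim Y_j\geq 1$, the object you get from the canonical bundle formula satisfies $K_{Y_j}+B_j+\mathbf{M}_j\equiv(\text{ample})$, not $\equiv 0$, so it does not witness $t\in K_{n-1}(I)$; worse, the threshold $t_j$ gets absorbed into the discriminant and moduli parts, whereas membership in $K_{n-1}(I)$ requires the very specific structure of condition $(\dagger)$ — a nef \emph{Cartier} divisor $M$ pushed forward and multiplied by $c$, or boundary coefficients of the exact shape $\frac{m-1+f+kc}{m}$ with $f\in I_+$ — and the canonical bundle formula does not deliver this. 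Your appeal to ``bounded Cartier index of the nef part $\mathbf{M}_j$'' is essentially the effective b-semi-ampleness problem for moduli parts, which is open; it cannot be invoked as a known input. The paper avoids all of this by restricting to a \emph{general fibre} of $g_j$ instead of descending to the base: the restriction is a numerically trivial generalized pair, possibly still of dimension $n$, lying in $\mathfrak{N}_n(I,c)$, so the first step only proves $\lim^1\PET_n(I)\subseteq\lim^1 N_n(I)$ with no dimension drop (Proposition \ref{prop: limit in numerically trivial}).

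The dimension drop is then a separate statement about accumulation points of invariants of numerically trivial pairs, $\lim^1 N_n(I)\subseteq N_{n-1}(I)$ (Proposition \ref{prop: induction of accumulation point}, Corollary \ref{cor: accumulation point}), and your inductive step ``same MMP plus canonical bundle formula'' cannot prove it: a numerically trivial pair has no Iitaka-type fibration to descend along. The actual argument runs a $(-M)$-MMP to a Mori fibre space, uses the BAB theorem to exclude the case where the pairs stay uniformly $\epsilon$-lc, extracts divisors of generalized log discrepancy close to $0$ to create boundary components with coefficients approaching $1$, performs generalized adjunction onto those components (keeping track, via Lemma \ref{le: trivial nef part}, of when a coefficient in $D_c(I)$ must appear), and invokes ACC for generalized lc thresholds and global ACC to close the induction; none of these ingredients appear in your sketch. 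Finally, identifying $N_{n-k}(I)$ with $K_{n-k}(I)$ (Picard number one, generalized klt) is itself not automatic and is the content of Lemma \ref{le: picard number 1}. As written, your proposal would need the open effective adjunction statement and still would not handle the numerically trivial (base-a-point) case, so the core of the theorem remains unproved.
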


The definition of the set $K_{n-k}(I)$ is given in Section \ref{subsec: a characterization} Equation \eqref{eq: sets}. Roughly speaking, $c \in K_{n-k}(I)$ if there exists an $(n-k)$-dimensional Fano variety $X$ of Picard number $1$, such that there exists a numerically trivial generalized klt pair $K_{X}+B+cM \equiv 0$ with either $M \not\equiv 0$ or $B$ has a coefficient $\frac{m-1+f+kc}{m}$ with $f \in I_+$ and $m, k \in \Nn$. Although the last condition sounds cumbersome, in view of Fujita's log spectrum conjecture \ref{conj: Fujita's log spectrum conjecture}, it is the most simple case. In fact, as $\frac{m-1+f+kc}{m}<1$, we must have $c<1$. Hence, an accumulation point of such $c$ automatically lies in the range predicted by the conjecture when $k \leq n-1$.

\medskip

The proof of Theorem \ref{thm: accumulation point of pet} relies heavily on the minimal model program, especially the resent progress on ACC for log canonical thresholds, generalized polarized pairs and BAB conjecture. Part of the argument follows along the same line as that on accumulation points of log canonical thresholds in \cite{HMX14}. The generalized polarized pairs have the advantage over the standard pairs in dealing with such problems. In fact, suppose that $c$ is the pseudo-effective threshold of $M$ with respect to $K_X+\Delta$, then $(X, \Delta+cM)$ may not necessarily be a standard part, but it is a generalized polarized pair. This point of view has been used in \cite{HL17} to deal with ACC for pseudo-effective thresholds.

\medskip

As an application, we give an upper bound for the $k$-th accumulation points when $k$ is large.

\begin{proposition}\label{prop: Fujita's conjecture for n-1}
Let $I \subseteq [0,1]$ be a DCC set. Suppose that $1$ is the only possible accumulation point of $I$. Then  $\lim^{n-1} (\PET_n(I)) \leq 2$ and $\lim^{n} (\PET_n(I)) \leq 1$.
\end{proposition}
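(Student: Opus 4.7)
The plan is to derive both inequalities from Theorem~\ref{thm: accumulation point of pet} applied with $k=n-1$, by analyzing the one-dimensional set $K_1(I)$.

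I would first reduce to the hypotheses of Theorem~\ref{thm: accumulation point of pet}. Setting $\overline{I}:=I_+\cup\{1\}$, one sees that $\overline{I}$ is still a DCC set with $1$ as the only possible accumulation point, contains $1$, satisfies $\overline{I}_+=\overline{I}$, and $\PET_n(I)\subseteq \PET_n(\overline{I})$. The theorem then yields $\lim^{n-1}(\PET_n(\overline{I}))\subseteq K_1(\overline{I})$.

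The first inequality reduces to showing $K_1(\overline{I})\subseteq[0,2]$. Since the only one-dimensional Fano variety of Picard number one is $\Pp^1$, every $c\in K_1(\overline{I})$ arises from a numerically trivial generalized klt pair $K_{\Pp^1}+B+cM\equiv 0$ with $M$ nef. Taking degrees yields $c\cdot\deg M=2-\deg B$. If $M\not\equiv 0$, then $\deg M\geq 1$ and $c\leq 2$, with $c>1$ forcing $\deg M=1$ and $\deg B<1$. If instead $M\equiv 0$, then $\deg B=2$, and by the definition of $K_1(\overline{I})$ some coefficient of $B$ equals $(m-1+f+kc)/m$ with $f\in\overline{I}$ and $m,k\in\Nn$; the generalized klt condition forces this coefficient to be strictly less than $1$, so $kc<1-f\leq 1$ and $c<1/k\leq 1$. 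In all cases $c\leq 2$, proving $\lim^{n-1}(\PET_n(I))\leq 2$.

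For the second inequality, monotonicity of $\lim^1$ under inclusion gives
\[
\lim^n(\PET_n(\overline{I}))=\lim^1\bigl(\lim^{n-1}(\PET_n(\overline{I}))\bigr)\subseteq \lim^1(K_1(\overline{I})),
\]
so it suffices to show every accumulation point of $K_1(\overline{I})$ is $\leq 1$. The previous analysis shows that any point of $K_1(\overline{I})\cap(1,2]$ must come from the case $M\not\equiv 0$, $\deg M=1$, and has the form $c=2-\deg B$ with $\deg B<1$. The coefficients of $B$ lie in a DCC subset $J\subseteq[0,1]$ derived from $\overline{I}$ whose only possible accumulation point is still $1$. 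Since the positive elements of $J$ have a positive minimum, each admissible $B$ has a uniformly bounded number $N$ of nonzero coefficients; passing to a subsequence with fixed $N$ and componentwise limits $b_{k,i}\to b_i^*$, a sum $\sum b_i^*=s_0<1$ would force every $b_i^*<1$, hence eventually $b_{k,i}\in J\cap[0,1-\epsilon]$ which is finite, contradicting genuine accumulation. Therefore admissible sums $\deg B<1$ accumulate only at $1$, so the values $c=2-\deg B$ in $(1,2]$ accumulate only at $1$, giving $\lim^1(K_1(\overline{I}))\subseteq[0,1]$.

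The main technical subtlety is confirming that the DCC set $J$ housing the coefficients of $B$ inherits from $\overline{I}$ the property that $1$ is its only possible accumulation point, which must be read off from the explicit description of $K_1(\overline{I})$; this is a standard feature of hyperstandard-type sets used throughout ACC-style arguments, after which the accumulation analysis is elementary.
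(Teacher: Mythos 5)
Your argument for the bound $\lim^{n-1}(\PET_n(I))\leq 2$ is essentially the same as the paper's: replace $I$ by a set of the form $(I\cup\{1\})_+$ so that Theorem~\ref{thm: accumulation point of pet} applies, land in $K_1$, then analyze the two cases $M'\not\equiv 0$ (giving $c\leq 2$ by degree count on $\Pp^1$) and $M'\equiv 0$ (giving $c<1$ from a coefficient of the form $\frac{m-1+f+kc}{m}<1$ with $k\geq 1$). Where you genuinely diverge is the bound $\lim^{n}(\PET_n(I))\leq 1$. The paper simply invokes Theorem~\ref{thm: accumulation point of pet} once more with $k=n$, landing in $K_0(I)=\{\frac{1-f}{m}\geq 0\mid f\in I_+, m\in\Nn\}$, whose elements are manifestly $\leq 1$ --- no further analysis needed. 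You instead stop at $k=n-1$, use monotonicity of $\lim^1$ to reduce to bounding $\lim^1(K_1(\overline I))$, and then run a DCC-style accumulation argument on the coefficients of $B$ for $c\in(1,2]$. This works: for $c>1$ every element of $D_c(\overline I)$ would need $kc<1-f\leq 1$ with $k\geq 1$, so no such coefficient is $<1$ and the coefficients of $B$ must in fact lie in $D(\overline I)$, a fixed DCC set with $1$ as its only accumulation point; your boundedness-of-$N$ and componentwise-limit argument then goes through. So the route is correct, and you were right to flag the identification of the coefficient set $J$ as the subtle point, but it is noticeably more work than the paper's one-line appeal to $K_0$; the paper's approach buys you an immediate, explicit answer, whereas yours stays within the $K_1$ picture at the cost of an extra ACC argument. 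If you want your proof to be tight, you should spell out that for $c>1$ the relevant set is $D(\overline I)$ rather than gesturing at it.
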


Notice that under the above condition, such upper bounds are sharp. Hence at least for surfaces, we have upper bounds for all terms of pseudo-effective accumulation points.

\medskip

Finally, upper bounds for all the $k$-th accumulation points are obtained in \cite{Li18} by a modified version of Theorem \ref{thm: accumulation point of pet} together with other techniques. 

\medskip

The paper is organized as follows. In Section \ref{sec: preliminaries}, we recall the necessary background for the proof. In Section \ref{sec: accumulation points}, Theorem \ref{thm: accumulation point of pet} is proved after a series of reductions. An application towards Fujita's log spectrum conjecture is given at the end of this section.

\medskip

\noindent\textbf{Acknowledgements}.
J. H. thanks his advisor Gang Tian for constant supports and encouragement. Z. L. thanks Chen Jiang for extensive discussions on special cases related to the problem. The first part of the paper was completed in the summer of 2017 while Z. L. stayed at the University of L\"ubeck and he thanks the quiet environment provided by the Zentrale Hochschulbibliothek L\"ubeck. Both authors thank Chenyang Xu for constant supports. Both authors also thank the anonymous referee for carefully reading the manuscript and providing constructive suggestions. This work is partially supported by NSFC Grant No.11601015.

\section{Preliminaries}\label{sec: preliminaries}
We work with complex numbers. Throughout the paper, $\Zz$ denotes the set of integers and $\Nn$ denotes the set of positive integers. We collect relevant definitions and results on generalized polarized pairs developed in \cite{BZ16}.

\subsection{Generalized polarized pairs}
\begin{definition}[{Generalized polarized pair, \cite[Definition 1.4]{BZ16}}]
A generalized polarized pair consists of a normal variety $X'$ equipped with projective morphisms 
\[
X \xrightarrow{f}X' \to Z,
\] where $f$ is birational and $X$ is normal, an $\Rr$-boundary $B'$, and an $\Rr$-Cartier divisor $M$ on $X$ which is nef$/Z$ such that $K_{X'}+B'+M'$ is $\Rr$-Cartier, where $M'\coloneqq f_*M$. We call $B'$ the boundary part and $M$ the nef part.
\end{definition}

For simplicity, we sometimes also call $M'$ the nef part without referring to $M$. When $Z$ is a closed point, we write $X \xrightarrow{f}X'$ instead of $X \xrightarrow{f}X' \to \{\rm pt\}$. From the definition, we see that $X$ could be replaced by any log resolution over $X$, and $M$ could be replaced by the pullback of $M$ accordingly. To define the \emph{generalized log discrepancy} of a divisor $E$ over $X'$, considering a high enough model $X$ which contains $E$ (say a resolution as above), if
\[
K_X+B+M=f^*(K_{X'}+B'+M'),
\] then the generalized log discrepancy of $E$ is defined as $1-{\rm mult}_EB$ (see \cite[Definition 4.1]{BZ16}). We say that $(X',B'+M')$ is generalized lc (resp. generalized klt) if the generalized log discrepancy of any prime divisor is $\geq 0$ (resp. $>0$). Just as in the standard setting, one can define generalized non-klt/lc centers and generalized non-klt/lc places. Besides, as $M$ is a nef divisor, if $M'$ is $\Rr$-Cartier, then by the negativity lemma (see \cite[(1.1)]{Sho92} or \cite[Lemma 3.6.2]{BCHM10}), $f^*M' =M+E$ with $E \geq 0$ an exceptional divisor. In particular, this implies that if $K_{X'}+B'$ is $\Rr$-Cartier, then the log discrepancy of a divisor $F$ with respect to $(X', B')$ is no less than the generalized log discrepancy of $F$ with respect to $(X', B'+M')$.

\subsection{Generalized adjunction}\label{subsec: adjunction}
Generalized adjunction for generalized polarized pairs is defined in \cite[Definition 4.7]{BZ16}, it will be used in the induction argument to lower the dimensions.

\begin{definition}[Adjunction for generalized polarized pairs]\label{def: adjunction}
Let $(X',B'+M')$ be a generalized polarized pair with data $X \xrightarrow{f} X' \to Z$ and $M$. Assume that $S'$ is the normalization of a component of  $\lfloor B' \rfloor$ and $S$ is its birational transform on $X$. Replacing $X$ we may assume that $f$ is a log resolution of $(X',B'+M')$. Write 
\[
K_X +B+M=f^*(K_{X'} +B' +M'),
\] and let
\[
K_S +B_S +M_S \coloneqq (K_X +B+M)|_S
\]
where $B_S = (B-S)|_S$ and $M_S =M|_S$. Let $g$ be the induced morphism $S\to S'$ and let $B_{S'} = g_*B_S$ and $M_{S'} =g_*M_S$. Then we get the equality
\[
K_{S'}+B_{S'}+M_{S'} = (K_{X'}+B'+M')|_{S'}
\] which is refered as generalized adjunction.
\end{definition}

The morphisms $S \xrightarrow{g} S' \to Z$ and $M|_S$ give a generalized polarized structure on $(S', B_{S'}+M_{S'})$. The singularities and coefficients behave just as in the standard adjunctions (see \cite{Sho92, Fli92}).  To be precise, when $(X',B'+M')$ is generalized lc, $B_{S'}$ is a boundary divisor on $S'$ (see \cite[Remark 4.8]{BZ16}) and $(S', B_{S'}+M_{S'})$ is still generalized lc. Moreover, suppose that $M= \sum \mu_j M_j$ with $M_j$ nef/$Z$ Cartier divisor for each $j$, and $B'=\sum b_i B_i'$ is the prime decomposition. Assume that $M_j'$ are $\Qq$-Cartier divisors, then the coefficient of a divisor $V$ in $B_{S'}$ is either $1$ or of the form
\begin{equation}\label{eq: coefficients}
\frac{m-1}{m}+\sum b_i \frac{d_i}{m}+\sum \mu_j \frac{e_j}{m},
\end{equation} with $m, d_i, e_j \in \Nn$ (see \cite[proof of Proposition 4.9]{BZ16}). In fact, when the coefficient of $V$ is less than $1$, $m$ is the Cartier index along $V$ (see \cite[(16.6.3)]{Fli92}). The term $\frac{m-1}{m}+\sum b_i \frac{d_i}{m}$ in \eqref{eq: coefficients} comes from the adjunction $(K_{X'}+B')|_{S'}$. The $M'$ and $M$ contribute to the last term $\sum \mu_j \frac{e_j}{m}$. Indeed, for $M'_j = f_*M_j$, we have $f^*M'_j=M_j+E_j$ with $E_j \geq 0$. $mM'_j$ is Cartier along the image of $V$, and $M_j$ is Cartier. Let $\tilde V$ be the strict transform of $V$ in $S\subset X$. Then $mE_j= mf^*M'_j-mM_j$ is a Cartier divisor along $V$. Thus the coefficient of $\tilde V$ in $E_j|_{S}$ is of the form $\frac{e_j}{m}$ with $e_j \in\Nn$, and so is the coefficient of $V$ in $g_*(E_j|_{S})$.

\subsection{MMP for generalized polarized pairs}
Although the the minimal model program (MMP) for generalized polarized pairs is not established in the full generality, some of the most important cases could be derived from the standard MMP. The following results are contained in \cite[\S 4]{BZ16} which are elaborated in \cite[\S 3]{HL18}.

\medskip

Assume that $K_{X'}+B'+M'+A'$ is nef/$Z$ for some $\Rr$-Cartier divisor $A' \geq 0$ which is big/$Z$. Moreover, assume that
\begin{quote}
($\star$) for any $s \in (0,1)$ there exists a boundary 
\[
\Delta' \sim_\Rr B'+sA'+M'/Z
\] such that $(X',\Delta'+(1-s)A')$ is klt.
\end{quote}

Condition $(\star)$ will be satisfied if $A'$ is generalized ample/$Z$ and either

(i) $(X', B'+M')$ is generalized klt, or

(ii) $(X', B'+M')$ is generalized lc and $(X',0)$ is klt.

\medskip

Under the above assumptions, we can run a $(K_{X'}+B'+M')$-MMP with scaling of $A'$. Under suitable assumptions, this MMP terminates. This is summarized in the following lemma.

\begin{lemma}[{\cite[Lemma 4.4]{BZ16}}]\label{lem: MMP}
Let $(X', B' + M')$ be a $\Qq$-factorial generalized lc polarized pair with data $X \xrightarrow{f} X' \to Z$ and $M$. Assume that $(X', B' + M')$ satisfies (i) or (ii) above. Run an MMP/$Z$ on $(K_X'+ B' + M')$ with scaling of some general ample/$Z$ $\Rr$-Cartier divisor $A'\geq 0$. Then the following hold:
\begin{enumerate}
\item Assume that $K_X'+ B' + M'$ is not pseudo-effective/$Z$. Then the MMP terminates with a Mori fibre space.
\item Assume that
\begin{itemize}
\item $K_{X'} + B' + M'$ is pseudo-effective/$Z$,
\item $(X', B' + M')$ is generalized klt, and that
\item $K_{X'} +(1+\alpha)B' +(1+\beta)M$ is $\Rr$-Cartier and big/$Z$ for some $\alpha, \beta \geq 0$. 
\end{itemize}
Then the MMP terminates with a minimal model $X''$ and $K_{X''} + B'' + M''$ is semi-ample/$Z$, hence it defines a contraction $\phi: X'' \to T''/Z$.
\end{enumerate}
\end{lemma}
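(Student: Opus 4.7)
The plan is to reduce the generalized polarized setting to the classical one via condition $(\star)$, and then invoke BCHM-type termination for the resulting klt pair. For each $s \in (0,1)$, choose a boundary $\Delta'_s \sim_{\Rr} B' + sA' + M'$ over $Z$ so that $(X', \Delta'_s + (1-s)A')$ is klt. Then
$$K_{X'}+B'+M'+\lambda A' \sim_{\Rr} K_{X'}+\Delta'_s+(\lambda-s)A' \quad \text{over } Z,$$
so a $(K_{X'}+B'+M')$-negative extremal ray $R$ that is $(K_{X'}+B'+M'+\lambda A')$-trivial is precisely a $(K_{X'}+\Delta'_s+(1-s)A')$-negative ray that is $(K_{X'}+\Delta'_s+(\lambda-s)A')$-trivial, provided $s<\lambda$. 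This identification allows us to match the two MMPs step-by-step: the same contractions and the same flips appear on both sides, as long as $s$ lies strictly below the current scaling threshold. Since $(X', \Delta'_s+(1-s)A')$ is klt with big boundary, existence of the divisorial contractions and flips, as well as termination with scaling, is known from BCHM.

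For part (1), the assumption that $K_{X'}+B'+M'$ is not pseudo-effective/$Z$ translates (via the same $\Rr$-linear equivalence with $\lambda=0$) into $K_{X'}+\Delta'_s-sA'$ being non-pseudo-effective/$Z$, hence the scaling thresholds drop to $0$. Applying BCHM to the classical MMP on $(X', \Delta'_s+(1-s)A')$ with scaling of $(1-s)A'$ (for $s$ chosen small enough) yields termination with a Mori fibre space, and the matching transports this conclusion back to the generalized pair.

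For part (2), the bigness/$Z$ of $K_{X'}+(1+\alpha)B'+(1+\beta)M$ together with the $\Rr$-linear equivalence $\Delta'_s \sim_{\Rr} B'+sA'+M'$ implies bigness of $K_{X'}+\Delta'_s+(1-s)A'$, so BCHM produces a minimal model $X''$ on which $K_{X''}+B''+M''$ is nef. To upgrade nefness to semi-ampleness, invoke a base-point-free theorem for generalized polarized pairs: since $K_{X''}+\Delta''_s+(1-s)A''$ is nef, big, and klt, it is semi-ample by the classical base-point-free theorem, and the resulting contraction $\phi:X''\to T''/Z$ is the one associated to $K_{X''}+B''+M''$ after undoing the perturbation.

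The main obstacle I anticipate is the bookkeeping of the parameter $s$. The scaling thresholds $\lambda_i$ of the generalized MMP form a decreasing sequence that may accumulate to a value comparable to any fixed $s$, at which point the matching with the klt MMP breaks down. One must either refine $s$ adaptively at each step while verifying that the already completed contractions and flips remain valid for the new choice, or argue uniformly that for any fixed small $s$ the two MMPs coincide until both terminate. Showing that semi-ampleness descends from the perturbed klt pair to $K_{X''}+B''+M''$ is also a delicate point, since one must check that the contraction is independent of the choice of $\Delta'_s$ and respects the nef part $M$ under $f$.
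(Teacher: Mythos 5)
The paper itself gives no proof of this lemma: it is imported wholesale as \cite[Lemma 4.4]{BZ16}, so the only meaningful comparison is with the proof in that reference, and your overall strategy --- use condition $(\star)$ to trade $B'+sA'+M'$ for a klt boundary $\Delta'_s$, identify the steps of the generalized MMP with scaling of $A'$ with steps of a classical klt MMP with scaling, and then quote BCHM --- is exactly the mechanism used there. (Minor slip in the matching: a ray $R$ with $(K_{X'}+B'+M')\cdot R<0$ and $(K_{X'}+B'+M'+\lambda A')\cdot R=0$, $\lambda>s$, is $(K_{X'}+\Delta'_s)$-negative, not $(K_{X'}+\Delta'_s+(1-s)A')$-negative; the klt MMP one runs is on $K_{X'}+\Delta'_s$ with scaling of $(1-s)A'$.)

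Two points, however, are genuine gaps rather than bookkeeping. First, the semi-ampleness step in (2) is a non sequitur as written: the base-point-free theorem applied to the nef, big, klt divisor $K_{X''}+\Delta''_s+(1-s)A''\sim_{\Rr}K_{X''}+B''+M''+A''$ gives semi-ampleness of \emph{that} divisor, and one cannot ``undo the perturbation'': semi-ampleness of $D+A''$ never implies semi-ampleness of $D$, and the contraction it defines is in general not the one associated to $K_{X''}+B''+M''$. The correct route is finiteness of ample models for the family of klt pairs with boundaries $\Delta'_s+tA'$ (BCHM), which shows the ample model of $K_{X''}+B''+M''+tA''$ is constant for $0<t\ll1$ and that $K_{X''}+B''+M''$ is pulled back from it; this is also where the hypothesis that $K_{X'}+(1+\alpha)B'+(1+\beta)M$ is big/$Z$ actually has to enter --- in your sketch it is only used to get bigness of $K_{X'}+\Delta'_s+(1-s)A'$, which is automatic (pseudo-effective plus ample/$Z$), so the hypothesis is never genuinely used. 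The same finiteness statement is what resolves the $s$-matching problem you flag but leave open in (2): if the scaling numbers $\lambda_i$ decrease below any fixed $s$, termination cannot come from a single klt MMP, and a fixed-$s$ reduction does not suffice. By contrast, in (1) the situation is the opposite of what you wrote: the $\lambda_i$ do not drop to $0$ but are bounded below by the pseudo-effective threshold $\tau>0$ of $K_{X'}+B'+M'$ with respect to $A'$ (negativity along the earlier steps), and it is precisely this uniform lower bound that lets you fix one $s<\tau$ once and for all, run the $(K_{X'}+\Delta'_s)$-MMP (whose canonical class is not pseudo-effective/$Z$, unlike $K_{X'}+\Delta'_s+(1-s)A'$), and quote BCHM termination with a Mori fibre space.
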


As an application, one can obtain an analogy for dlt modifications. However, we tacitly avoid to introduce the notation of generalized dlt pairs for simplicity.

\begin{proposition}[{\cite[Lemma 4.5]{BZ16}}]\label{prop: dlt}
Let $(X', B' + M')$ be a generalized lc polarized pair with data $X \xrightarrow{f} X' \to Z$ and $M$. Let $S_1, \ldots, S_r$ be prime divisors on birational models of $X'$ which are exceptional/$X'$ and whose generalized log discrepancies with respect to $(X', B' + M')$ are at most $1$. Then perhaps after replacing $f$ by a high resolution, there exist a $\Qq$-factorial generalized lc polarized pair $(X'', B'' + M'')$ with data $X \xrightarrow{g} X'' \to Z$ and $M$, and a projective birational morphism $\phi: X'' \to X'$ such that
\begin{enumerate}
\item $S_1, \ldots, S_r$ appear as divisors on $X''$,
\item each exceptional divisor of $\phi$ is one of the $S_i$ or is a component of $\lfloor B''\rfloor$, 
\item $(X'',0)$ has klt singularities,
\item $K_{X''}+B''+M'' = \phi^*(K_{X'}+B'+M')$, and
\item if $\lfloor B'' \rfloor = 0$, then $(X'', B''+M'')$ is generalized klt.
\end{enumerate}
In particular, the exceptional divisors of $\phi$ are exactly the $S_i$ if $(X', B' + M')$ is generalized klt.
\end{proposition}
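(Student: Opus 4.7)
The plan is to realize $\phi: X'' \to X'$ as the output of a relative MMP starting from a log resolution on which all the $S_i$ have been extracted, with the boundary chosen so that precisely the ``redundant'' exceptional divisors get contracted. First, replace $f$ by a higher log resolution of $(X', B'+M')$ on which $S_1, \ldots, S_r$ all appear as prime divisors; this is possible because each $S_i$ corresponds to a divisorial valuation over $X'$. Writing $K_X + B + M = f^*(K_{X'}+B'+M')$, one has $\mathrm{mult}_F B = 1 - a(F, X', B'+M')$ for every exceptional prime $F$. Define
\[
\Gamma \coloneqq \widetilde{B} + \sum_{i=1}^r (1-a_i) S_i + \sum_{G} G,
\]
where $\widetilde{B}$ is the strict transform of $B'$, $a_i \coloneqq a(S_i, X', B'+M') \in [0,1]$, and $G$ ranges over the exceptional divisors of $f$ other than the $S_i$. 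Then $\Gamma$ is a boundary with snc support, so $(X, \Gamma)$ is lc, and since $M$ is nef on $X$, the generalized pair $(X, \Gamma+M)$ is generalized lc; moreover $(X, 0)$ is klt because $X$ is smooth. A direct computation shows
\[
K_X+\Gamma+M = f^*(K_{X'}+B'+M')+E,
\]
where $E \coloneqq \Gamma - B$ is exceptional and effective: its coefficient on each $S_i$ is $0$, and on each remaining exceptional $G$ it equals $a(G, X', B'+M') \geq 0$. In particular $\Supp(E)$ avoids every $S_i$ and every exceptional log canonical place of $(X', B'+M')$.

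Next, run a $(K_X+\Gamma+M)$-MMP over $X'$ with scaling of a general ample divisor, invoking Lemma \ref{lem: MMP} (after a small perturbation $\Gamma \rightsquigarrow \Gamma - \epsilon \lfloor \Gamma \rfloor$ to place oneself in the generalized klt regime). Since $K_X+\Gamma+M \equiv E$ over $X'$ and $E \geq 0$, every $(K_X+\Gamma+M)$-negative extremal ray is also $E$-negative, so only components of $\Supp(E)$ can be contracted; in particular, none of the $S_i$ and no exceptional log canonical place of $(X', B'+M')$ is ever touched. After finitely many steps the MMP terminates with a birational contraction $\phi: X'' \to X'$; writing $g: X \dashrightarrow X''$ for the induced birational map, set $B'' \coloneqq g_*\Gamma$ and $M'' \coloneqq g_*M$. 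Since $K_{X''}+B''+M'' = \phi^*(K_{X'}+B'+M') + g_*E$ is nef over $X'$ and $g_*E \geq 0$ is exceptional over $X'$, the negativity lemma forces $g_*E = 0$; hence the entire support of $E$ has been contracted, and the crepant identity $K_{X''}+B''+M'' = \phi^*(K_{X'}+B'+M')$ holds. Replacing $X$ by a further common log resolution if needed, we may assume $g$ is a morphism, matching the data $X \xrightarrow{g} X'' \to Z$ for $(X'', B''+M'')$.

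It remains to verify the five properties. Item (1) is immediate since no $S_i$ is contracted. For (2), the exceptional divisors of $\phi$ are precisely those exceptional divisors of $f$ that survive the MMP, namely the $S_i$ together with the surviving exceptional log canonical places $G$; the latter carry coefficient $1$ in $\Gamma$, hence appear in $\lfloor B''\rfloor$. Property (3) is preserved throughout the MMP, starting from the klt pair $(X,0)$ on a smooth $X$. Property (4) is the crepant identity just established. Finally, (5) holds because $\lfloor B''\rfloor = 0$ rules out any surviving exceptional log canonical place and forces every $a_i > 0$, so every generalized log discrepancy with respect to $(X'', B''+M'')$ is positive. The principal obstacle is the termination of the MMP while contracting only the intended divisors: Lemma \ref{lem: MMP}(2) requires a generalized klt pair whereas $\Gamma$ may carry components of coefficient $1$, so one performs the klt perturbation above, verifies that the output model and contracted locus are independent of sufficiently small $\epsilon > 0$, and recovers the crepant identity by letting $\epsilon \to 0$.
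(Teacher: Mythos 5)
Your construction of $\Gamma$ (crepant coefficients $1-a_i$ on the $S_i$, coefficient $1$ on all other $f$-exceptional divisors) and the concluding negativity-lemma bookkeeping are essentially the argument of \cite[Lemma 4.5]{BZ16}; the paper itself simply cites that lemma for (1), (2), (4) and only argues (3) and (5). The genuine gap is exactly at the step you flag as the principal obstacle. Your key claim that only components of $\Supp(E)$ can be contracted rests on the identity $K_X+\Gamma+M\equiv E\geq 0$ over $X'$, and this identity is destroyed by the perturbation $\Gamma\mapsto\Gamma-\epsilon\lfloor\Gamma\rfloor$: the perturbed pair is numerically equivalent over $X'$ to $E-\epsilon\lfloor\Gamma\rfloor$, which is no longer effective, so nothing prevents the perturbed MMP from contracting some $S_i$ or some generalized lc place, and then (1) and (2) are lost. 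This is not a removable technicality. Take $X'$ a smooth surface, $M=0$, $B'$ an irreducible nodal curve with coefficient $1$, and $S_1$ the exceptional divisor of the blow-up $X\to X'$ of the node, so $a_1=0\leq 1$ and $\Gamma=\widetilde{B'}+S_1=B$, $E=0$. Unperturbed, nothing is contracted and $X''=X$ extracts $S_1$ as required; but for every $\epsilon>0$ one has $K_X+\Gamma-\epsilon(\widetilde{B'}+S_1)\equiv-\epsilon(\widetilde{B'}+S_1)$ over $X'$ and $(\widetilde{B'}+S_1)\cdot S_1=1>0$, so the perturbed MMP over $X'$ contracts $S_1$ and terminates at $X'$ itself. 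Hence the output is \emph{not} independent of small $\epsilon$, property (1) fails for every $\epsilon>0$, and the proposed limit $\epsilon\to 0$ has nothing to converge from.

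The correct route, which is what \cite[Lemma 4.5]{BZ16} does, is to run the MMP for the generalized lc pair $(X,\Gamma+M)$ itself: this is permitted by condition (ii) preceding Lemma \ref{lem: MMP}, because $(X,0)$ is klt on the log resolution, and termination is obtained not from the generalized klt statement of Lemma \ref{lem: MMP}(2) but from the fact that this MMP over $X'$ is an MMP on the effective divisor $E$, which is very exceptional over the birational base and hence gets contracted after finitely many steps by Birkar's result on very exceptional divisors (invoked in the proof of \cite[Lemma 4.5]{BZ16}). Once that is in place, your negativity argument and your verifications of (1), (2), (4), (5) go through; for (3) note that "klt is preserved by the MMP" should be justified, as in the paper, via the auxiliary klt pairs supplied by condition $(\star)$ at each step (each intermediate variety is $\Qq$-factorial and supports a klt pair, hence is itself klt), rather than by running an MMP for $(X,0)$.
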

\begin{proof}
Each claim except (3) and (5) is explicitly stated in \cite[Lemma 4.5]{BZ16}. (3) holds because we obtain $X''$ by running an MMP/$X'$ with scaling, and in each step of this process, we construct a klt pair. Hence, in each step, the variety itself is klt. In particular, in the last step, $(X'',0)$ is klt. For (5), when $\lfloor B'' \rfloor = 0$, by the construction of \cite[Lemma 4.5]{BZ16}, we have $\lfloor B' \rfloor = 0$, and the generalized log discrepancy of any exceptional divisor in the log resolution $X \to X'$ is larger than $0$ (otherwise, it would appear in $\lfloor B'' \rfloor$). The notion of generalized klt singularity is independent of the log resolution, hence the original pair $(X', B'+M')$ is generalized klt. This implies that $(X'', B''+M'')$ is also generalized klt.
\end{proof}

We can extract the divisors which are exactly $S_1, \ldots, S_r$ when $X'$ has better singularities.
 
\begin{proposition}[{\cite[Lemma 4.6]{BZ16}}]\label{lem: dlt for klt}
Under the notation and assumptions of Proposition \ref{prop: dlt}, further assume that $(X',C')$ is klt for some $C'$, and that the generalized log discrepancies of the $S_i$ with respect to $(X', B'+ M')$ are $< 1$. Then we can construct $\phi$ so that in addition it satisfies:
\begin{enumerate}
\item its exceptional divisors are exactly $S_1, \ldots, S_r$, and
\item if $r = 1$ and $X'$ is $\Qq$-factorial, then $\phi$ is an extremal contraction.
\end{enumerate}
\end{proposition}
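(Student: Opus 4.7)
The plan is to bootstrap Proposition \ref{prop: dlt}: first apply it to $S_1,\dots,S_r$ to obtain an auxiliary model, and then run a short MMP over $X'$ to contract the unwanted extra exceptional divisors, using the klt hypothesis $(X',C')$ to supply condition $(\star)$.

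First I would apply Proposition \ref{prop: dlt} to the divisors $S_1,\dots,S_r$ to get a projective birational morphism $\phi_0\colon X_0\to X'$ from a $\Qq$-factorial generalized lc pair $(X_0,B_0+M_0)$ with $K_{X_0}+B_0+M_0=\phi_0^{*}(K_{X'}+B'+M')$ and $(X_0,0)$ klt. Write the $\phi_0$-exceptional divisors as $S_1,\dots,S_r$ together with extra divisors $T_1,\dots,T_s$, each of which, by Proposition \ref{prop: dlt}(2), lies in $\lfloor B_0\rfloor$; since $(X',B'+M')$ is generalized lc, each $T_j$ has generalized log discrepancy exactly $0$ and appears in $B_0$ with coefficient $1$. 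By the additional hypothesis, the $S_i$ appear in $B_0$ with coefficient strictly less than~$1$.

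Next I would produce $\phi\colon X''\to X'$ by contracting only the $T_j$. Pick a small $\epsilon>0$ and consider
\[
K_{X_0}+\bigl(B_0-\epsilon\textstyle\sum_j T_j\bigr)+M_0 \;\equiv_{/X'}\; -\epsilon\sum_j T_j,
\]
a $\phi_0$-exceptional anti-effective divisor. After lowering $\epsilon$ if necessary, the coefficients of $B_0-\epsilon\sum T_j$ are all $<1$, and using the klt pair $(X_0,\phi_0^{-1}_{*}C'+\text{exceptional correction})$ pulled back from $(X',C')$, one checks that $(X_0,B_0-\epsilon\sum T_j+M_0)$ is generalized klt and that condition $(\star)$ of Section~2.3 holds over $X'$ with respect to a general ample $A'$. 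Lemma \ref{lem: MMP}(1) then lets us run a $(K_{X_0}+B_0-\epsilon\sum T_j+M_0)$-MMP over $X'$; since the divisor is $\phi_0$-exceptional, not pseudo-effective over $X'$, and anti-effective, the negativity lemma forces each step to have its negative locus contained in $\bigcup_j T_j$, and the MMP terminates on a model $X''$ on which the pushforward of $\sum T_j$ is $0$, i.e., all the $T_j$ are contracted. The $S_i$ persist because $-\epsilon\sum T_j$ restricted to the strict transform of $S_i$ is still $\phi_0$-exceptional anti-effective, so no $(K+B-\epsilon\sum T_j+M)$-negative extremal ray can contract a component of $S_i$. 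For (2), when $r=1$ and $X'$ is $\Qq$-factorial, the resulting $\phi\colon X''\to X'$ is a $\Qq$-factorial birational contraction with a single exceptional divisor, hence of relative Picard number one, i.e., an extremal contraction.

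The main obstacle is the second step: verifying that the MMP is well-defined (condition $(\star)$ under generalized klt and the presence of the nef part $M_0$) and that it terminates while contracting exactly the $T_j$ and none of the $S_i$. The first concern is handled by combining the klt pair $(X',C')$ with the negligible coefficient perturbation; the second by repeated use of the negativity lemma, applied both globally over $X'$ to force termination (the anti-effective exceptional part must become trivial on a relative weak log canonical model) and locally on each $S_i$ to prevent its contraction. Once these steps are in place, properties (3)–(5) of Proposition \ref{prop: dlt} transfer automatically to $X''$ because each MMP step preserves $\Qq$-factoriality, klt-ness of the underlying variety, and the generalized klt condition when $\lfloor B''\rfloor=0$.
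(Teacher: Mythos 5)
The paper itself does not reprove this statement: it is quoted verbatim from \cite[Lemma 4.6]{BZ16}, and the argument there is different from yours. One perturbs on $X'$ directly, replacing $(X',B'+M')$ by a convex combination with the klt pair $(X',C')$ (boundary $(1-\epsilon)B'+\epsilon C'$, nef part $(1-\epsilon)M$); for small $\epsilon$ this pair is generalized klt while the generalized log discrepancies of the $S_i$ with respect to it stay $<1$. The final clause of Proposition \ref{prop: dlt} (in the generalized klt case the extracted divisors are exactly the prescribed ones) then produces $\phi\colon X''\to X'$ with exceptional divisors exactly $S_1,\dots,S_r$; afterwards $B''$ is defined by crepant pullback of the \emph{original} pair, effectivity of $B''$ using precisely the hypothesis that the generalized log discrepancies of the $S_i$ are $<1$, and the $r=1$ statement follows from $\Qq$-factoriality of $X'$ and $X''$.

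Your extract-then-contract strategy has a genuine gap at the contraction step, and it is a sign problem rather than a technicality. Since $K_{X_0}+B_0+M_0\equiv 0$ over $X'$ and each $T_j$ has coefficient $1$, lowering those coefficients gives $K_{X_0}+(B_0-\epsilon\sum_j T_j)+M_0\equiv -\epsilon\sum_j T_j$ over $X'$, i.e.\ the MMP divisor is \emph{anti-effective} exceptional. A curve $C$ over $X'$ is negative for it iff $(\sum_j T_j)\cdot C>0$, and such curves need not lie in $\bigcup_j T_j$ (they may lie in an $S_i$ meeting some $T_j$), so your appeal to the negativity lemma to confine the steps to $\bigcup_j T_j$ is exactly backwards; worse, termination only requires $-\epsilon\sum_j T_j''$ to be nef over $X'$, which does not force the $T_j$ to be contracted --- for a single exceptional divisor $T$ of a point blow-up of a surface, $-T$ is already relatively nef and the MMP does nothing. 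The standard mechanism ``an MMP over $X'$ on a class relatively numerically equivalent to an effective exceptional divisor $E$ contracts exactly $\Supp E$'' needs the effective sign, which here would mean raising the coefficients of the $T_j$ above $1$, impossible within generalized lc pairs: crepant lc places cannot be removed by this coefficient trick, which is why \cite{BZ16} avoids the detour through the larger model altogether. Two smaller inaccuracies: $(X_0,B_0-\epsilon\sum_j T_j+M_0)$ need not be generalized klt (the $S_i$, or components of the strict transform of $B'$, may have coefficient $1$); you only need case (ii) before Lemma \ref{lem: MMP}, since $(X_0,0)$ is klt. And your reason that no $S_i$ is contracted (``$-\epsilon\sum_j T_j$ restricted to $S_i$ is exceptional anti-effective'') is not an argument.
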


\subsection{A collection of relevant results}
For reader's convenience, we collect some relevant results. A set of real numbers is called ACC (ascending chain condition) if there is no strictly increasing infinite sequence. In the same fashion, we define a set to be DCC (descending chain condition). The first two results are called ACC for generalized lc thresholds and global ACC (\cite[Theorem 1.5, 1.6]{BZ16}) respectively. They generalize the corresponding results in standard setting (\cite{HMX14}). For the definition of the generalized lc threshold, see \cite[Definition 4.3]{BZ16}.

\begin{theorem}[ACC for generalized lc thresholds]\label{thm: ACC for glct}  Let $\Lambda$ be a DCC set of nonnegative real numbers and $n$ a natural number. Suppose that $(X', B'+ M')$ is a generalized polarized pair with data $X \xrightarrow{f} X' \to Z$ and nef divisor $M$ on $X$. Assume that $D'$ on $X'$ is an effective $\Rr$-divisor and that $N$ on $X$ is an $\Rr$-divisor which is nef$/Z$ and that $D'+N'$ is $\Rr$-Cartier with $N' = f_*N$. Suppose that the following conditions are satisfied. 
\begin{enumerate}
\item $(X',B'+M')$ is generalized lc of dimension $n$,
\item $M= \sum \mu_j M_j$ where $M_j$ are nef$/Z$ Cartier divisors and $\mu_j \in \Lambda$,
\item $N= \sum \nu_k N_k$ where $N_k$ are nef$/Z$ Cartier divisors and $\nu_k \in \Lambda$, and
\item the coefficients of $B'$ and $D'$ belong to $\Lambda$.
\end{enumerate}
Then there is an ACC set  $\Theta$ depending only on $\Lambda, n$ such that the generalized lc threshold of $D'+ N'$ with respect to $(X', B'+ M')$ belongs to $\Theta$.
\end{theorem}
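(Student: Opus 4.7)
The plan is to induct on the dimension $n$ and argue by contradiction. Suppose there were a strictly increasing sequence of generalized lc thresholds $t_1 < t_2 < \cdots$ realized by pairs $(X_i', B_i' + M_i')$ with data $D_i' + N_i'$ satisfying the hypotheses, converging to some $t \in (0,\infty]$. By the very definition of the generalized lc threshold, each enlarged pair with boundary $B_i' + t_i D_i'$ and nef part $M_i + t_i N_i$ is generalized lc but not generalized klt, so there is a prime divisor $E_i$ over $X_i'$ whose generalized log discrepancy with respect to this enlarged pair equals $0$.

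Using Proposition~\ref{prop: dlt} applied to $E_i$ (whose generalized log discrepancy is $\leq 1$), I would pass to a $\Qq$-factorial birational model on which $E_i$ appears as a component of the boundary with coefficient $1$. Then I apply generalized adjunction (Definition~\ref{def: adjunction}) to $E_i$ to produce a generalized polarized pair $(E_i, B_{E_i} + M_{E_i})$ of dimension $n-1$. By the coefficient formula~\eqref{eq: coefficients}, the coefficients of $B_{E_i}$, together with the multiplicities appearing on the nef part that descends from the decomposition of $M_i + t_i N_i$, lie in the hyperstandard set
\[
\Lambda^{\mathrm{hyp}} = \Bigl\{ \tfrac{m-1}{m} + \sum_\ell b_\ell \tfrac{d_\ell}{m} + \sum_k \mu_k \tfrac{e_k}{m} \ \Big|\ b_\ell, \mu_k \in \Lambda \cup \{t_j\}_{j\geq 1},\ m, d_\ell, e_k \in \Nn \Bigr\} \cup \{1\}.
\]
Since $\Lambda$ is DCC and $\{t_j\}$ is strictly increasing (hence DCC as a set), the closure $\Lambda^{\mathrm{hyp}}$ is again DCC; crucially it depends only on $\Lambda$ and the fixed sequence of thresholds.

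The next step is to identify a generalized lc threshold on $E_i$ that detects $t_i$. Because $E_i$ sits over a generalized non-klt center of the $t_i$-critical pair, at least one adjoint boundary or nef coefficient must depend nontrivially on $t_i$ --- otherwise the same vanishing of generalized log discrepancy would be attained at some strictly smaller parameter, contradicting minimality of $t_i$. Tracking equation~\eqref{eq: coefficients}, this produces an affine expression $a_i t_i + c_i$, with $a_i, c_i$ rationals of controlled denominator, that is itself a generalized lc threshold on $E_i$ of an effective divisor plus nef part whose coefficients lie in a DCC subset of $\Lambda^{\mathrm{hyp}}$. Passing to a subsequence so that $a_i, c_i$ stabilize and applying the induction hypothesis in dimension $n-1$ with coefficient set $\Lambda^{\mathrm{hyp}}$ forces these values to live in an ACC set, contradicting the strict ascent $t_i \nearrow t$.

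The hard part will be the bookkeeping at the adjunction step: one must choose $E_i$ so that the restriction genuinely records the dependence on $t_i$ (rather than collapsing it into a term of the form $\tfrac{m-1}{m}$ that is blind to the parameter), and one must verify that $\Lambda^{\mathrm{hyp}}$ is determined by $\Lambda$ alone after one induction step so the hypothesis feeds back into itself --- this is the standard closure property of hyperstandard sets. A secondary technical point is the preliminary reduction: one may first need to perform a generalized $\Qq$-factorial dlt-type modification of the entire pair $(X_i', B_i' + M_i')$ via Proposition~\ref{prop: dlt} so that extraction of $E_i$ and the subsequent adjunction are well-defined. The base case $n=1$ is immediate, as generalized lc thresholds on a curve reduce to linear relations among coefficients and nef degrees of Cartier divisors, both of which lie in DCC sets.
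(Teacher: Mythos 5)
The paper does not prove Theorem~\ref{thm: ACC for glct}: it is quoted verbatim from \cite[Theorem 1.5]{BZ16} in the ``collection of relevant results'' and is used as a black box, so there is no in-paper proof to compare against. Assessing your sketch on its own terms: the framework (induction on $n$, contradiction from $t_1<t_2<\cdots$, extraction of a generalized non-klt place $E_i$ via Proposition~\ref{prop: dlt}, adjunction, hyperstandard control of coefficients via \eqref{eq: coefficients}) is the right scaffolding, but the pivotal inductive step has a genuine gap. You claim that adjunction to $E_i$ produces an affine expression $a_i t_i + c_i$ that ``is itself a generalized lc threshold on $E_i$'' of a suitable effective-plus-nef datum, so that the inductive hypothesis of the very theorem being proved applies in dimension $n-1$. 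That step is not justified. The adjoint pair $(E_i, B_{E_i}+M_{E_i})$ is generalized lc, but nothing in the construction exhibits a divisor on $E_i$ whose threshold equals $a_it_i+c_i$; moreover the model on which $E_i$ is extracted depends on $t_i$, so one cannot vary the parameter and speak of a threshold on a fixed $E_i$. As written the argument does not close.

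What actually drives the proof (in \cite{HMX14} for the classical case and in \cite{BZ16} for the generalized one) is different in an essential way: one restricts not merely to $E_i$ but to a \emph{general fiber} $F_i$ of the induced map $E_i \to \phi_i(E_i)$, where $\phi_i(E_i)$ is the generalized non-klt center in $X'_i$. Because the pullback of $K_{X'_i}+B'_i+t_iD'_i+M'_i+t_iN'_i$ is numerically trivial along fibers, generalized adjunction yields a \emph{numerically trivial} generalized lc pair $K_{F_i}+B_{F_i}+M_{F_i}\equiv 0$ of dimension $<n$, with boundary and nef coefficients of the form in \eqref{eq: coefficients} where $t_i$ enters linearly. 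The contradiction is then forced by Global ACC (Theorem~\ref{thm: global ACC}), not by ACC for thresholds inductively: the finite set $\Lambda^0$ pins down $m,f,k$ (and the nef multipliers) after passing to a subsequence, and a strictly increasing $t_i$ with a fixed $k>0$ is incompatible with a fixed coefficient in $\Lambda^0$. To run this one must also check that at least one adjoint coefficient genuinely depends on $t_i$ (i.e.\ $k>0$), which holds because $E_i$ can be taken to be a non-klt place of $t_i(D'_i+N'_i)$ rather than of $B'_i+M'_i$; this is the ``detects $t_i$'' step you flagged, and it is resolved via the numerically-trivial reduction rather than by reinterpreting $a_it_i+c_i$ as a threshold. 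This is also why Theorem~\ref{thm: global ACC} is stated alongside Theorem~\ref{thm: ACC for glct} in the paper: in \cite{BZ16} the two are proved together, with ACC for generalized lc thresholds in dimension $n$ relying on Global ACC in lower dimension.
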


\begin{theorem}[Global ACC]\label{thm: global ACC}
Let  $\Lambda$ be a DCC set of nonnegative real numbers and $n$ a natural number. Then there is a finite subset $\Lambda^0 \subseteq \Lambda$ depending only on $\Lambda, d$ such that if $(X',B' + M')$ is a generalized pair with data $X \to X' \to Z$ and $M$ satisfying the following conditions,
\begin{enumerate}
\item $(X', B'+M')$ is generalized lc of dimension $n$,
\item $Z$ is a point,
\item $M= \sum \mu_j M_j$ where $M_j$ are nef Cartier divisors and $\mu_j \in \Lambda$,
\item $\mu_j =0$ if $M_j \equiv 0$,
\item the coefficients of $B'$ belong to $\Lambda$, and 
\item $K_{X'} +B' +M' \equiv 0$,
\end{enumerate}
then the coefficients of $B'$ and the $\mu_j$ belong to $\Lambda^0$.
\end{theorem}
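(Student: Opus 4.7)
The plan is to argue by induction on the dimension $n$, following the strategy of \cite{HMX14} adapted to generalized polarized pairs. Suppose for contradiction that the conclusion fails; after passing to a subsequence we obtain generalized polarized pairs $(X_i',B_i'+M_i')$ satisfying (1)--(6) together with either a strictly increasing sequence of coefficients $b_{i,\ell}\nearrow b_\infty$ of $B_i'$, or a strictly increasing sequence $\mu_{i,j}\nearrow\mu_\infty$ with $M_{i,j}\not\equiv 0$, all drawn from $\Lambda$.

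The first step is to reduce to the case where $(X_i',B_i'+M_i')$ is generalized klt and $X_i'$ is $\Qq$-factorial. Passing to a dlt modification via Proposition~\ref{prop: dlt} preserves the hypotheses. If $\lfloor B_i'\rfloor\neq 0$ for infinitely many $i$, choose a component $S_i'\subseteq\lfloor B_i'\rfloor$ and apply generalized adjunction from Section~\ref{subsec: adjunction}, producing an $(n-1)$-dimensional generalized lc pair $(S_i',B_{S_i'}+M_{S_i'})$ with $K_{S_i'}+B_{S_i'}+M_{S_i'}\equiv 0$. By the coefficient formula \eqref{eq: coefficients}, the coefficients of $B_{S_i'}$ lie in a DCC set determined by $\Lambda$, so the inductive hypothesis forces them (and the relevant $\mu_j$) to lie in a finite subset. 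Since $M_{i,j}\not\equiv 0$ contributes nontrivially in \eqref{eq: coefficients}, a strict increase in $\mu_{i,j}$ or in the relevant coefficients of $B_i'$ would produce a corresponding strict increase on $S_i'$, a contradiction.

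We may therefore assume $\lfloor B_i'\rfloor=0$ and each pair is generalized klt. Since $-K_{X_i'}\equiv B_i'+M_i'$, a mild downward perturbation of one of the increasing coefficients makes the resulting log canonical divisor not pseudo-effective; by Lemma~\ref{lem: MMP}(1) the associated MMP with scaling ends at a Mori fibre space $X_i''\to T_i''$. If $\dim T_i''>0$, restrict to a general fiber and close the loop by induction on $n$. Otherwise $X_i''$ is a $\Qq$-factorial generalized klt Fano variety of Picard number $1$ with $K_{X_i''}+B_i''+M_i''\equiv 0$. In this terminal case, compare the finite-stage pair with a limit pair in which the monotone coefficient is replaced by its limit $b_\infty$ (respectively $\mu_\infty$), and apply ACC for generalized lc thresholds (Theorem~\ref{thm: ACC for glct}) to the resulting generalized lc thresholds; they must simultaneously form an ACC set and strictly approach $1$ from below, which is impossible.

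The principal obstacle is the tight interplay between the unbounded Cartier index $m$ in the adjunction formula \eqref{eq: coefficients} and the assumed monotonic sequences: one must guarantee that an increase in $b_{i,\ell}$ or $\mu_{i,j}$ is not absorbed by a varying $m$ on the lower-dimensional pair. Propagating both the boundary coefficients and the nef multiplicities through the MMP reduction, and then converting ACC for glct into an actual contradiction in the Fano Picard-$1$ case, form the main bookkeeping challenge.
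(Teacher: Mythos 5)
First, note that the paper does not prove this statement at all: it is quoted verbatim from \cite[Theorem 1.6]{BZ16} (global ACC for generalized polarized pairs), so there is no internal proof to compare with, and any honest write-up here would have to reproduce a substantial part of the argument of \cite{BZ16}. Your sketch follows the right general strategy (the \cite{HMX14}-style induction that \cite{BZ16} adapts), but as written it has genuine gaps rather than just omitted bookkeeping. The most serious one is the adjunction step: when you restrict to a component $S_i'\subseteq\lfloor B_i'\rfloor$, the coefficients on $S_i'$ have the form \eqref{eq: coefficients}, $\frac{m-1}{m}+\sum b_\ell\frac{d_\ell}{m}+\sum\mu_j\frac{e_j}{m}$, with $m,d_\ell,e_j$ varying with $i$; a strictly increasing $b_{i,\ell}$ or $\mu_{i,j}$ need not produce a strictly increasing coefficient downstairs (the increase can be absorbed by changing $m,d_\ell,e_j$, the component carrying the increasing coefficient may not meet $S_i'$ at all, and a nef divisor $M_{i,j}\not\equiv 0$ may restrict to something numerically trivial on the lower-dimensional model). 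You name exactly this as the ``principal obstacle'' and leave it as a ``bookkeeping challenge,'' but it is the heart of the proof, not bookkeeping. A related failure point: after restricting to $S_i'$ or to a general fibre of the Mori fibre space, some $M_j$ with $\mu_j\neq 0$ may become numerically trivial, so hypothesis (4) of the statement fails for the lower-dimensional pair and the inductive hypothesis cannot be invoked as you do; handling precisely this degeneration is a nontrivial part of \cite{BZ16} (and is the reason the present paper needs its condition $(\dagger)(4)$ and Lemma~\ref{le: trivial nef part} in the analogous arguments).

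The terminal Picard-number-one case is also not an argument as stated. Saying one should ``compare the finite-stage pair with a limit pair'' and that the resulting generalized lc thresholds ``form an ACC set and strictly approach $1$ from below'' does not identify which threshold of which divisor with respect to which pair is being bounded, nor why replacing $b_{i,\ell}$ by $b_\infty$ (which makes $K_{X_i''}+B_i''+M_i''$ ample when $\rho=1$, not numerically trivial) feeds into Theorem~\ref{thm: ACC for glct} at all; in \cite{HMX14} and \cite{BZ16} this step requires a carefully chosen auxiliary construction, not a direct appeal to ACC for thresholds. So the proposal captures the correct overall architecture (dlt modification, adjunction on $\lfloor B'\rfloor$, perturb-and-run an MMP to a Mori fibre space, induct on a fibre or reduce to $\rho=1$), but the two steps where the contradiction is actually extracted are missing, and the proof as proposed would not go through without them.
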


The next result is \cite[Theorem 1.1]{Bir16b} which is known as Borisov-Alexeev-Borisov conjecture (BAB conjecture).

\begin{theorem}[BAB conjecture]\label{thm: BAB}
Let $n$ be a natural number and $\epsilon$ a positive real number. Then the projective varieties $X$ such that
\begin{enumerate}
\item $(X, B)$ is $\epsilon$-lc of dimension $n$ for some boundary $B$, and
\item $-(K_X + B)$ is nef and big,
\end{enumerate}
form a bounded family
\end{theorem}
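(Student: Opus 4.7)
The plan is to follow Birkar's strategy, which decomposes the problem into effective birationality, boundedness of the anti-canonical volume, and the theory of complements, all feeding into an inductive scheme on dimension. The $\epsilon$-lc hypothesis is crucial throughout: it bounds singularities from below, which in turn controls how wild the pairs can be.

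First, I would reduce to a convenient form via the MMP. Taking a small $\Qq$-factorialization and running appropriate MMPs, one can assume $X$ is $\Qq$-factorial. After further simplifications (e.g., modifying $B$), one may often reduce to the situation where $X$ is $\epsilon$-klt Fano with Picard number $1$, which is the ``basic'' case from which the rest is bootstrapped.

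Second, the heart of the argument is to establish two key facts simultaneously:
\begin{enumerate}
\item[(a)] the anti-canonical volume $\bigl(-(K_X+B)\bigr)^n$ is bounded above by a constant $v = v(n,\epsilon)$;
\item[(b)] effective birationality holds: there is an integer $m = m(n,\epsilon)$ such that $\bigl|-m(K_X+B)\bigr|$ defines a birational map.
\end{enumerate}
Granted (a) and (b), the pair embeds into a fixed projective space via $|-m(K_X+B)|$ with bounded degree, and the $\epsilon$-lc condition controls the singularities. Standard boundedness criteria then conclude that such $X$ form a bounded family.

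Third, the engine driving both (a) and (b) is the existence of \emph{bounded complements}: there exists $N = N(n,\epsilon)$ depending only on $n$ and $\epsilon$ such that any such pair admits an $N$-complement $B^{+} \geq B$, i.e.\ $(X, B^{+})$ is lc and $N(K_X + B^{+}) \sim 0$. To prove complements are bounded one inducts on dimension: lift sections from a non-klt center using adjunction, apply the adjunction formula for generalized polarized pairs of Section \ref{subsec: adjunction} (so the boundary on the center falls into a DCC set governed by \eqref{eq: coefficients}), and invoke Theorem \ref{thm: ACC for glct} and Theorem \ref{thm: global ACC} to conclude uniform bounds. Complements then feed back into a bound for $(-(K_X+B))^n$ via a non-vanishing and concavity argument for $\vol$, and into effective birationality by producing enough sections in $|-m(K_X+B)|$ to separate points on a birational locus.

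The hardest step is unquestionably the construction of bounded complements together with effective birationality, which together form the crux of Birkar's work. The main obstacle is the inductive passage to lower dimensions through non-klt centers: one must produce a log canonical place with controlled singularities, restrict via generalized adjunction, and show that the resulting lower-dimensional generalized pair still satisfies the inductive hypotheses with parameters depending only on $n$ and $\epsilon$. The $\epsilon$-lc assumption is essential here because it prevents log canonical centers from being too singular, allowing the ACC results above to provide the uniform bounds required to close the induction.
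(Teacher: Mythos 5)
The paper does not prove this statement; Theorem~\ref{thm: BAB} is quoted verbatim as \cite[Theorem 1.1]{Bir16b} and used later as a black box (in Step~2 of Proposition~\ref{prop: induction of accumulation point}, to place the $X'_i$ in a bounded family). There is therefore no proof in the paper to compare your sketch against.

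As a high-level outline of Birkar's actual argument, your sketch hits the right landmarks: reduction to $\epsilon$-lc Fano type via the MMP, the existence of bounded $N$-complements proved by induction on dimension through non-klt centers and (generalized) adjunction, effective birationality of $|-m(K_X+B)|$ for a uniform $m$, a uniform upper bound on the anticanonical volume, and finally boundedness via a fixed multi-anticanonical embedding. But as written it is a list of the theorems Birkar proves rather than a proof: essentially every clause (``one can assume $X$ is $\Qq$-factorial Fano of Picard number $1$,'' ``one inducts on dimension,'' ``standard boundedness criteria then conclude'') compresses tens of pages, and the interlocking inductions between complements, effective birationality, and volume bounds --- which is where the real difficulty lives --- are only gestured at. A small inaccuracy worth flagging is the reduction to Picard number $1$: the boundedness statement is for Fano type varieties and the bootstrapping in Birkar's work does not simply reduce to the $\rho=1$ case and then deduce the general one. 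If you intended the sketch to serve as a citation plus summary, it is accurate; if you intended it as a self-contained proof, it is far from one, and in the context of the present paper it is not needed, since the authors treat BAB as an input, not something to re-derive.
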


Finally, for completeness, we mention the ACC for pseudo-effective thresholds (see \eqref{eq: pet}) of generalized polarized pairs. This will not be used in the rest of the paper and it can be obtained by the same argument as \cite[Theorem 1.4]{HL17}.

\begin{theorem}
Let $\Lambda$ be a DCC set of nonnegative real numbers and $n$ a natural number. Let $(X', B'+N')$ be a generalized polarized pair satisfying the following conditions.
\begin{enumerate}
\item $\dim X=n$, coefficients of $B'$ and $N'$ are in $\Lambda$,
\item $(X', B'+N')$ is generalized lc such that $B'$ is the boundary part and $N'$ the nef part, and
\item $M'$ is a nef and big Cartier divisor.
\end{enumerate}
Then the set of pseudo-effective thresholds of $M'$ with respect to $K_{X'}+ B'+N'$ is an ACC set.
\end{theorem}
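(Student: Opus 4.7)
I would follow the template of \cite[Theorem 1.4]{HL17}, adapted to generalized polarized pairs.

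I argue by contradiction. Suppose there is a sequence of pairs $(X'_i, B'_i+N'_i)$ and Cartier divisors $M'_i$ satisfying the hypotheses such that $c_i \coloneqq \pet(X'_i, B'_i+N'_i; M'_i)$ is strictly increasing (to a bounded limit, since bigness of $M'_i$ makes each $\pet$ finite). Set $D_i \coloneqq K_{X'_i} + B'_i + N'_i + c_i M'_i$. Because the pseudo-effective cone is closed, $D_i$ is pseudo-effective. On the other hand, $D_i$ cannot be big: writing $M'_i \equiv A + E$ with $A$ ample and $E$ effective and using that the ample cone is open, bigness of $D_i$ would allow a small $\epsilon > 0$ with $D_i - \epsilon M'_i$ still pseudo-effective, contradicting minimality of $c_i$.

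Next I regard $(X'_i, B'_i + N'_i + c_i M'_i)$ as a generalized polarized pair with nef part $N'_i + c_i M'_i$ whose coefficients lie in $\Lambda' \coloneqq \Lambda \cup \{c_j : j \in \Nn\}$. Since $\{c_j\}$ is strictly increasing and $\Lambda$ is DCC, $\Lambda'$ is again DCC. After a $\Qq$-factorial generalized dlt-type modification (Proposition \ref{prop: dlt}) I may assume the underlying variety is klt. Running a $(K + B'_i + N'_i + c_i M'_i)$-MMP with scaling of an ample divisor, and using bigness of $M'_i$ to verify the bigness hypothesis on $K + (1+\alpha)B' + (1+\beta)(N' + c_i M')$ for suitable $\alpha,\beta \geq 0$, Lemma \ref{lem: MMP}(2) (combined with the standard klt-perturbation trick to handle the generalized lc case) gives a minimal model $X''_i$ on which $K + B''_i + (N + c_i M)''_i$ is semi-ample, defining a contraction $\phi_i : X''_i \to T_i$. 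Since $D_i$ is not big, $\dim T_i < \dim X''_i$, so a general fibre $F_i$ is positive-dimensional.

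Restricting along the general fibre $F_i$ (which is not a boundary component, so the discriminant is trivial and the coefficients of $B''_i|_{F_i}$ remain in $\Lambda$) produces a numerically trivial generalized lc pair $(F_i,\, B''_i|_{F_i} + (N + c_i M)|_{F_i})$. The key point is that $M''_i|_{F_i} \not\equiv 0$: otherwise $M''_i$ would descend numerically through $\phi_i$, giving $(M''_i)^{\dim X''_i} = 0$ and contradicting bigness of $M''_i$. Hence $c_i$ genuinely appears as the coefficient of the nonzero nef component $M''_i|_{F_i}$.

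Finally, I apply Theorem \ref{thm: global ACC} (with DCC set $\Lambda'$) to the sequence of numerically trivial generalized lc pairs just constructed. The theorem produces a finite subset $\Lambda'^0 \subseteq \Lambda'$ containing every $c_i$, contradicting strict monotonicity. The main obstacle is the MMP step of the third paragraph: running the generalized MMP when the pair is only generalized lc (not necessarily klt) and certifying that the birational transform of $M$ remains big on the minimal model; this is delicate but standard given the machinery of \cite[\S 4]{BZ16}.
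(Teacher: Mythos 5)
The paper itself gives no proof of this statement (it is stated ``for completeness'' with a pointer to \cite[Theorem 1.4]{HL17}), and your skeleton --- regard $c_i$ as the coefficient of a big nef Cartier summand of the nef part, run a generalized MMP to a minimal model, use semi-ampleness to get a fibration, restrict to a general fibre to obtain a numerically trivial generalized lc pair, and contradict Theorem \ref{thm: global ACC} with the DCC set $\Lambda\cup\{c_j\}$ --- is indeed the intended shape of the argument. The genuine gap is the step you compress into ``the standard klt-perturbation trick'': Lemma \ref{lem: MMP}(2) requires the pair to be generalized \emph{klt}, and the perturbation that fixes this (as in Step 2 of Proposition \ref{prop: limit in numerically trivial}, via Lemma \ref{le: perturbation of threshold}) replaces $B'$ by some $\tilde B'\sim_\Rr B'+\epsilon A'$ and the threshold $c_i$ by a nearby $c_i'<c_i$. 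After restricting to the general fibre, the pair that is numerically trivial is the one with boundary $\tilde B'$, whose coefficients lie in no fixed DCC set, while the pair with the original coefficients is only pseudo-effective there; so Theorem \ref{thm: global ACC} cannot be applied to it, and what one actually extracts is a lower-dimensional pseudo-effective threshold squeezed in $(c_i',c_i]$ --- i.e.\ the start of an induction on dimension, not a one-step contradiction. This is precisely why the paper's machinery (and HL17's) carries the coefficient sets $D(I)$, $D_{c}(I)$ and argues by adjunction to non-klt centres: in the strictly lc case one restricts to a component of $\lfloor B\rfloor$ or to an extracted lc place, and there the nef part carrying $c_i$ may become numerically trivial, so that $c_i$ survives only inside boundary coefficients of the form $\frac{m-1+f+kc_i}{m}$ (Lemma \ref{le: trivial nef part}), which then requires a separate ACC analysis as in Lemma \ref{lem: ACC for N}. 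None of this is addressed in your outline, and it is where the real work of the theorem lies; an lc analogue of Lemma \ref{lem: MMP}(2) with semi-ampleness is not available in the toolkit quoted here.

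By contrast, the point you single out as ``the main obstacle'' is not one: bigness of the birational transform of $M$ on the minimal model is immediate (sections inject under pushforward), and its restriction to a general fibre of the semi-ample fibration is big because big equals ample plus effective and the general fibre avoids the effective part. Two genuine but minor items you should still record: what Theorem \ref{thm: global ACC} needs is non-triviality of the nef \emph{Cartier} divisor on the birational model upstairs, i.e.\ of $h^*M'$ restricted to the preimage of the fibre (true by the same bigness argument), together with hypothesis (4) for those summands of $N$ that become numerically trivial after restriction; and the divisors of coefficient $1$ introduced by Proposition \ref{prop: dlt} force you to replace $\Lambda$ by $\Lambda\cup\{1\}$, which is harmless. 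With these caveats, your argument is complete only in the generalized klt case; the lc case needs the dimension induction sketched above.
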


\section{Accumulation points of pseudo-effective thresholds}\label{sec: accumulation points}

\subsection{A characterization of $k$-th accumulation points}\label{subsec: a characterization}

Let $M$ be a big $\Qq$-Cartier divisor. Define the pseudo-effective threshold of $M$ with respect to $D$ to be 
\begin{equation}\label{eq: pet}
\pet(D; M) \coloneqq \inf\{t \in \Rr_{\geq 0} \mid D + tM \text{~is effective}\}.
\end{equation} For a log pair $(X, \Delta)$, set the pseudo-effective threshold of $M$ with respect to $K_X+\Delta$ to be
\[
\pet(X, \Delta; M) \coloneqq \inf\{t \in \Rr_{\geq 0} \mid K_X+\Delta+tM \text{~is effective}\}.
\]

The following lemma gives extra flexibility on singularities when working with the accumulation points of pseudo-effective thresholds.

\begin{lemma}\label{le: perturbation of threshold}
Let $D, B$ be $\Qq$-Cartier divisors, and $M$ be a big $\Qq$-Cartier divisor. Then 
\[
\lim_{\epsilon \to 0} \pet(D+\epsilon B; M) = \pet(D; M).
\]
\end{lemma}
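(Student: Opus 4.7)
The plan is to prove that $c_\epsilon \coloneqq \pet(D+\epsilon B;M)$ and $c \coloneqq \pet(D;M)$ differ by $O(|\epsilon|)$, which immediately yields the desired limit. The crucial input is the bigness of $M$: it forces $M$ to dominate $B$ (up to a constant) in the effective cone, thereby providing a one-parameter ``cushion'' that absorbs the perturbation $\epsilon B$ uniformly in $\epsilon$.

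First I would produce a constant $\lambda > 0$, depending on $B$ and $M$ but not on $\epsilon$, such that both $\lambda M + B$ and $\lambda M - B$ are effective (equivalently, $\Qq$-linearly equivalent to an effective divisor). Since $M$ is big and $\Qq$-Cartier, there is a positive integer $m$ and a decomposition $mM \sim_\Qq A + E$ with $A$ ample and $E$ effective. For $\lambda$ sufficiently large, $\tfrac{\lambda}{m} A \pm B$ is ample, hence effective, and then
\[
\lambda M \pm B \sim_\Qq \left(\tfrac{\lambda}{m} A \pm B\right) + \tfrac{\lambda}{m} E
\]
is a sum of effective divisors, so effective (in fact big). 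Fix such a $\lambda$.

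Next I carry out the two-sided comparison. Given $\delta > 0$, by definition $D + (c+\delta)M$ is effective; adding the effective divisor $\epsilon B + |\epsilon|\lambda M = |\epsilon|(\operatorname{sign}(\epsilon) B + \lambda M)$ gives that
\[
D + \epsilon B + (c + \delta + |\epsilon|\lambda) M
\]
is effective, so $c_\epsilon \leq c + \delta + |\epsilon|\lambda$. Letting $\delta \to 0$ yields $c_\epsilon \leq c + |\epsilon|\lambda$. Symmetrically, $D + \epsilon B + (c_\epsilon + \delta) M$ is effective, and adding the effective divisor $-\epsilon B + |\epsilon|\lambda M$ shows that $D + (c_\epsilon + \delta + |\epsilon|\lambda)M$ is effective, giving $c \leq c_\epsilon + \delta + |\epsilon|\lambda$; letting $\delta \to 0$ yields $c \leq c_\epsilon + |\epsilon|\lambda$. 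Therefore $|c_\epsilon - c| \leq |\epsilon|\lambda$, which tends to $0$.

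No step is a substantive obstacle; the only thing to watch is the interpretation of ``effective'' and the uniformity in $\epsilon$. The bigness of $M$ is precisely what makes the cushion $\lambda$ exist and be independent of $\epsilon$, so the same proof goes through whether one reads ``effective'' as being $\Qq$-linearly equivalent to an effective divisor or as lying in the pseudo-effective cone (in the latter case the auxiliary $\delta$ can even be dropped using closedness of the cone).
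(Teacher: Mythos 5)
Your proof is correct and follows essentially the same route as the paper: both use the bigness of $M$ to produce a uniform ``cushion'' making a multiple of $M$ dominate $\pm B$ in the effective cone, and then run the same two-sided comparison to get $|\pet(D+\epsilon B;M)-\pet(D;M)|\leq \lambda|\epsilon|$. The only differences are cosmetic (a single constant $\lambda$ with an explicit Kodaira-type decomposition $mM\sim_\Qq A+E$, versus the paper's two constants $a,b$ and a sign reduction, phrased via numerical equivalence), and your remark on the interpretation of ``effective'' matches the implicit convention the paper already uses.
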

\begin{proof}
By considering $-B$, we can assume that $\epsilon \to 0^+$. Because $M$ is big, there is an $a \in \Rr_{> 0}$ such that $M-aB\equiv E\geq0$. By 
\[
D +\epsilon B \equiv D+\frac{\epsilon}{a}(M-E) \equiv D+\frac{\epsilon}{a}M - \frac{\epsilon}{a} E,
\] we have 
\[
\pet(D+\epsilon B;M)\geq \pet(D+\frac{\epsilon}{a}M;M) \geq \pet(D;M)-  \frac{\epsilon}{a}.
\] On the other hand, because there exists $b>0$ such that $b M + B \equiv F>0$,
\[
\pet(D +\epsilon B;M) \leq \pet(D- \epsilon b M;M) \leq \pet(D;M) + \epsilon b.
\] By taking $\epsilon \to 0^+$, we get the desired result.
\end{proof}

We use the following notation and conventions. Assume that $I\subseteq [0,1]$, and $c \in \Rr_{\geq 0}$. Recall that $I_+ = \{\sum n_i a_i \leq 1 \mid a_i \in I, n_i \in \Nn\} \cup \{0\}$. Set 
\[
\begin{split}
&D(I) = \{\frac{m-1+f}{m} \leq 1 \mid m \in \Nn, f \in I_+\}, and\\
&D_c(I) = \{\frac{m-1+f+kc}{m} \leq 1 \mid m, k \in \Nn, f \in I_+\}.
\end{split}
\]

By direct computations (for example, see \cite[Lemma 4.4]{MP04}), we have
\begin{equation}\label{eq: D(I)}
\begin{split}
& D(D(I)) = D(I)\cup \{1\}, \ D(D_c(I))\subseteq D_c(I)\cup \{1\},\\
& D_c(D(I)) \subseteq D_c(I)\cup \{1\}, \ D_c(D_c(I)) \subseteq D_c(I)\cup \{1\}.
\end{split}
\end{equation} Moreover, when $c>0$,
\[
D(D_c(I)) = D_c(I),\ D_c(D(I)) = D_c(I).
\]

For a divisor $\Delta$, we write $\Delta \in I$ if the coefficients of $\Delta$ lie in $I$. We are interested in the set of pseudo-effective thresholds
\[
\begin{split}
\PET_n(I) = \{ & \pet(X, \Delta; M) \mid (X, \Delta) \text{~is lc}, \Delta \in I, \\
&M \text{~is a nef and big Cartier divisor~}, \dim X=n\}.
\end{split}
\] Notice that the $\PET_n$ in Fujita's log spectrum conjecture \ref{conj: Fujita's log spectrum conjecture} is contained in $\PET_n(\{1\})$.

\medskip

Suppose that $X \to X'$ is a generalized polarized pair with $M'$ the nef part ($M'$ may not be effective) and $f_*M=M'$. For $c \in \Rr_{\geq 0}$, a generalized polarized pair $(X', B'+cM')$ is said to satisfy condition ($\dagger$) if the following conditions hold:

\begin{quote}
~($\dagger$)  
\begin{enumerate}
\item $(X', B' + cM')$  is generalized lc,
\item $K_{X'}+B' + cM' \equiv 0$, and  $B' \in D(I) \cup D_c(I)$, 
\item $M' = f_*M$ with $M$ a nef Cartier divisor, and $M'$ a $\Qq$-Cartier divisor,
\item if $M \equiv 0$, then at least one coefficient of $B'$ lies in $D_c(I)$.
\end{enumerate} 
\end{quote}

\medskip

The following sets will be considered in the sequel. First, for $c \in \Rr_{\geq 0}$, let
\begin{equation}\label{eq: set N}
\begin{split}
\mathfrak{N}_n(I,c) = &\{(X', B' + cM') \mid \dim X =n, \\
&\text{~and~} (X', B' + cM') \text{~satisfies condition ($\dagger$)} \}.
\end{split}
\end{equation}

Notice that when $M \equiv 0$ (or equivalently $M'\equiv 0$), this generalized polarized pair belongs to $\mathfrak{N}_n(I,c)$ defined in \cite{HMX14} Page 559. It is crucial to require $M$ to be Cartier. This property will be preserved under the generalized MMP and all the actions performed below. For $n \in \Nn$, set
\begin{equation}\label{eq: sets}
\begin{split}
\mathfrak{K}_n(I,c) = &\{(X', B' + cM') \mid (X', B' + cM') \in \mathfrak{N}_n(I,c)  \\
&\text{~is generalized klt, and~} \rho(X')=1\}\\
N_n(I) =  &\{c \in \Rr \mid \mathfrak{N}_m(I,c) \neq \emptyset, m \leq n \}\\
K_n(I) =  &\{c \in \Rr \mid \mathfrak{K}_m(I,c) \neq \emptyset, m \leq n \}.
\end{split} 
\end{equation} Notice that in $N_n(I)$ and $K_n(I)$, we also consider varieties of dimensions less than $n$. Besides, set
\begin{equation}\label{eq: N_0}
N_0(I)=K_0(I)=\{\frac{1-f}{m} \geq 0 \mid f\in I_+, m\in \Nn\}.
\end{equation} Notice that $N_0(I) \subseteq K_1(I) \subseteq K_n(I)$. In fact, by $\frac 1 2\in D(I)$, for any $f\in I_+, f \leq 1$, let $X'=\Pp^1, B'=\frac 1 2 (p_1+p_2)+f q$ and $M'=mz$, where $p_1, p_2, q, z \in X'$ are distinct closed points. Then $K_{X'}+B'+cM' \equiv 0$ for $c = \frac{1-f}{m}$.

\begin{lemma}\label{le: trivial nef part}
Let $I \subseteq [0,1], c \in \Rr_{\geq 0}$ and $(X', B'+cM')$ be a generalized lc pair with data $f:X\to X'$ and $cM$. Suppose that $B', M \in I$, $M'$ is $\Qq$-Cartier, and $K_{X'}+B'+cM' \equiv 0$. Let $S'$ be the normalization of an irreducible component of $\lfloor B' \rfloor$. Let $S$ be the strict transform of $S'$ on $X$, and
\[
(K_{X'}+B'+cM')|_{S'}=K_{S'}+B_{S'}+cM_{S'}
\] be the generalized adjunction. If $M'|_{S'} \not\equiv 0$, but $M|_S \equiv 0$, then $B_{S'} \in D(I) \cup D_{c}(I)$, and at least one coefficient is $\frac{m-1+f+kc}{m}$ with $m, k \in \Nn, f\in I_+$. Moreover, in this case, $(S', B_{S'})$ is lc and $K_{S'}+B_{S'} \equiv 0$.
\end{lemma}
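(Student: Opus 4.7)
The strategy is to combine the explicit coefficient formula for generalized adjunction in \eqref{eq: coefficients} with the contrast between the hypotheses $M|_S\equiv 0$ and $M'|_{S'}\not\equiv 0$, the latter being encoded in the $f$-exceptional divisor $E\geq 0$ from the decomposition $f^*M'=M+E$ (provided by the negativity lemma, using that $M'$ is $\Qq$-Cartier and $M$ is nef).

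The inclusion $B_{S'}\in D(I)\cup D_c(I)$ follows immediately from \eqref{eq: coefficients}: writing $M=\sum_j \mu_j M_j$ with $M_j$ prime Cartier and $\mu_j\in\Nn$ (since $M$ is Cartier with coefficients in $I\subseteq[0,1]$), each coefficient of $B_{S'}$ at a prime $V$ is $1$ or of the form
\[
\frac{m-1+\sum_i b_i d_i + c\sum_j \mu_j e_j}{m}=\frac{m-1+f+kc}{m},
\]
where $m,d_i,e_j\in\Nn$, $b_i\in I$, $f=\sum b_i d_i\in I_+$, and $k=\sum \mu_j e_j\in\Zz_{\geq 0}$. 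Restricting $K_{X'}+B'+cM'\equiv 0$ to $S'$ via generalized adjunction gives $K_{S'}+B_{S'}+cM_{S'}\equiv 0$, and $M|_S\equiv 0$ forces $M_{S'}=g_*(M|_S)\equiv 0$, so $K_{S'}+B_{S'}\equiv 0$. For the lc property of $(S',B_{S'})$: generalized adjunction already gives $(S',B_{S'}+cM_{S'})$ generalized lc. Setting $F:=g^*M_{S'}-M|_S$, the divisor $F$ is $g$-exceptional with $-F$ being $g$-nef (as $M|_S$ is nef on $S$ and $g^*M_{S'}$ is $g$-trivial), so the negativity lemma yields $F\geq 0$; comparing $K_S+B_S^{\mathrm{lc}}:=g^*(K_{S'}+B_{S'})$ with $K_S+B_S+cM|_S=g^*(K_{S'}+B_{S'}+cM_{S'})$ then gives $B_S^{\mathrm{lc}}=B_S-cF\leq B_S$, whose coefficients are $\leq 1$ by the glc hypothesis.

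The crux is producing a coefficient with $k\geq 1$. Restricting $f^*M'=M+E$ to $S$ and applying $g_*$:
\[
M'|_{S'}=g_*(M|_S)+g_*(E|_S)=M_{S'}+g_*(E|_S).
\]
Since $M_{S'}\equiv 0$ while $M'|_{S'}\not\equiv 0$, the effective divisor $g_*(E|_S)$ is nonzero, so it has some prime component $V\subseteq S'$ with positive coefficient. Tracing through the derivation of \eqref{eq: coefficients}, the contribution of $cM$ to the coefficient of $V$ in $B_{S'}$ is $c\sum_j\mu_j e_j/m>0$, and since $\mu_j,e_j\in\Zz_{\geq 0}$ with at least one product $\mu_j e_j\geq 1$, we get $k=\sum\mu_j e_j\geq 1$. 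Thus at least one coefficient of $B_{S'}$ has the claimed form $\frac{m-1+f+kc}{m}$ with $k\in\Nn$.

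The main obstacle is the bookkeeping in this last step: one must identify precisely how a component $W$ of $E|_S$ with non-contracted image $V=g(W)$ produces a strictly positive integer $e_j$ in the adjunction formula at $V$. This relies on the construction $E_j=f^*M_j'-M_j$ with $mE_j$ Cartier along $V$, so the coefficient of the strict transform $\tilde V\subseteq S$ in $E_j|_S$ is $e_j/m$ with $e_j\in\Nn$, which is strictly positive precisely when some component of $E_j|_S$ over $V$ survives pushforward. Combined with $M$ being Cartier (so $\mu_j\in\Nn$), this forces $k\in\Nn$ rather than merely $k>0$. The lc verification likewise rests on the careful application of the negativity lemma to $F=g^*M_{S'}-M|_S$, which is legitimate since $M_{S'}$ is $\Qq$-Cartier in the generalized pair structure on $S'$.
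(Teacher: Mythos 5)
Your treatment of the main claim coincides with the paper's: both decompose $f^*M'=M+E$ via the negativity lemma, restrict to $S$ and push forward to get $M'|_{S'}\equiv M_{S'}+g_*(E|_S)$, deduce $g_*(E|_S)\not\equiv 0$ from $M'|_{S'}\not\equiv 0$ and $M|_S\equiv 0$, and then invoke the bookkeeping at the end of Section 2.2 to see that the surviving component of $g_*(E|_S)$ contributes $ce/m$ with $e\in\Nn$ to a coefficient of $B_{S'}$, hence a coefficient $\frac{m-1+f+kc}{m}$ with $k\in\Nn$; the memberships $B_{S'}\in D(I)\cup D_c(I)$ and the triviality $K_{S'}+B_{S'}\equiv 0$ (from $M_{S'}=g_*(M|_S)\equiv 0$) are also obtained exactly as in the paper. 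One small correction to your bookkeeping: do not decompose $M$ into prime components (prime components of a Cartier divisor need not be Cartier, let alone nef); the decomposition relevant to \eqref{eq: coefficients} is that the nef part of the pair is $cM$ with $M$ itself nef Cartier, so there is a single coefficient $\mu=c$ and the contribution is $ce/m$ with $e\in\Nn$, which is precisely what makes $k$ a positive integer.

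Where you genuinely diverge is the verification that $(S',B_{S'})$ is lc, and there your justification is flawed: you form $F=g^*M_{S'}-M|_S$ and apply negativity, claiming that $M_{S'}$ is $\Qq$-Cartier ``in the generalized pair structure on $S'$''. The generalized pair structure only makes $K_{S'}+B_{S'}+cM_{S'}$ $\Rr$-Cartier; it does not make $M_{S'}=g_*(M|_S)$ $\Qq$-Cartier, and the paper itself stresses in Steps 4 and 5 of Proposition \ref{prop: induction of accumulation point} that $M_{S'}$ may fail to be $\Qq$-Cartier (which is exactly why it passes to $\Qq$-factorial models there). So $g^*M_{S'}$, hence your $F$ and the comparison $B_S^{\mathrm{lc}}=B_S-cF$, need not be defined in general. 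The paper instead deduces lc-ness of $(S',B_{S'})$ directly from generalized lc-ness of $(S',B_{S'}+cM_{S'})$ via the comparison of log discrepancies with generalized log discrepancies noted in Section 2.1, and defers the $\Qq$-Cartier issue to the applications, where it is resolved by $\Qq$-factorialization. Your argument becomes correct verbatim once one first passes to such a model (or otherwise knows $M_{S'}$ is $\Rr$-Cartier), but as written this key hypothesis is asserted rather than proved.
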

\begin{proof}
By the generalized adjunction formula, we have $B_{S'} \in D(I) \cup D_{c}(I)$, and $(S', B_{S'})$ is lc because $({S'}, B_{S'}+cM_{S'})$ is generalized lc. Moreover, as $M|_S \equiv 0$, $M_{S'} \equiv 0$, we have $K_{S'}+B_{S'} \equiv 0$. We claim that there exists a coefficient $\frac{m-1+f+kc}{m}$ of $B_{S'}$ with $k>0$.

\medskip

By the negativity lemma, we have 
\begin{equation}\label{eq: negativity}
f^*M'=M+E
\end{equation} with $E$ an effective exceptional divisor. Let $g$ be the morphism $S \to S'$, then $(f^*M')|_S=g^*(M'|_{S'})$. Hence $g^*(M'|_{S'})=(f^*M')|_S=M|_S+E|_S$. As $M'|_{S'} \not\equiv 0$ and $M|_S \equiv 0$, 
\[
E|_S \equiv g^*(M'|_{S'})\not\equiv 0.
\] Thus $g_*(E|_S) \equiv M'|_{S'} \not\equiv 0$. Because $B_{S'} = g_*((B-S)|_S+cE|_S)$, we show the claim (see the last paragraph of Section \ref{subsec: adjunction}).
\end{proof}

The following lemma is in the same spirit of \cite[Lemma 11.4]{HMX14}.

\begin{lemma}\label{le: picard number 1}
Suppose that $1 \in I$, then $N_n(I) = K_n(I)$.
\end{lemma}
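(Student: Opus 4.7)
The inclusion $K_n(I) \subseteq N_n(I)$ is immediate. For the reverse inclusion, fix $c \in N_n(I)$ and take $(X', B'+cM') \in \mathfrak{N}_m(I, c)$ with $m \leq n$; the plan is to produce a pair in $\mathfrak{K}_{m_0}(I, c)$ for some $m_0 \leq n$ by induction on $m$. The base case $m = 0$ holds by definition, since $N_0(I) = K_0(I)$ in \eqref{eq: N_0}.

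For the inductive step I would first apply Proposition \ref{prop: dlt} (with $r = 0$) to obtain a projective birational morphism $\phi \colon X'' \to X'$ and a $\Qq$-factorial generalized lc pair $(X'', B''+cM'')$ with $K_{X''}+B''+cM'' = \phi^*(K_{X'}+B'+cM') \equiv 0$. Any $\phi$-exceptional divisor enters $B''$ with coefficient $1$, and since $1 \in I$ forces $1 = \frac{1-1+1}{1} \in D(I)$, the coefficients of $B''$ still lie in $D(I) \cup D_c(I)$; the nef datum $X \to X''$ and $M$ is unchanged, so condition ($\dagger$) is preserved and I may assume $(X', B'+cM')$ itself is $\Qq$-factorial. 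If $\lfloor B' \rfloor \neq 0$, I would then take a component $S'$ of $\lfloor B' \rfloor$ and perform generalized adjunction (Definition \ref{def: adjunction}) to obtain $(S', B_{S'}+cM_{S'})$ of dimension $m-1$ with $K_{S'}+B_{S'}+cM_{S'} \equiv 0$; by the coefficient formula \eqref{eq: coefficients} together with the closure relations \eqref{eq: D(I)}, the new boundary still lies in $D(I) \cup D_c(I)$. Condition ($\dagger$)(4) splits into cases: if $M|_S \not\equiv 0$ there is nothing to do; if $M'|_{S'} \not\equiv 0$ but $M|_S \equiv 0$, Lemma \ref{le: trivial nef part} produces the required $D_c(I)$-coefficient; and if $M \equiv 0$ one must choose $S'$ so that it meets some original component of $B'$ whose coefficient already lies in $D_c(I)$. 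The induction hypothesis then yields $c \in K_{m-1}(I) \subseteq K_n(I)$.

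If $(X', B'+cM')$ is generalized klt, I would reduce the Picard number by MMP. If $\rho(X') = 1$, we are done. Otherwise pick a component $B_{i_0}'$ of $B'$ and form $K_{X'}+B'-\epsilon B_{i_0}'+cM' \equiv -\epsilon B_{i_0}'$ for small $\epsilon > 0$; this is not pseudo-effective, the perturbed pair remains generalized klt (so condition (i) of the running assumption holds), and Lemma \ref{lem: MMP}(1) lets me run a $(K_{X'}+B'-\epsilon B_{i_0}'+cM')$-MMP with scaling of an ample divisor, ending in a Mori fiber space $X_0 \to T$. Each contracted extremal ray $R$ satisfies $(K+B+cM)\cdot R = 0$ and $B_{i_0}' \cdot R > 0$, so $K+B+cM$ stays numerically trivial throughout the MMP and the output $(X_0, B_0+cM_0)$ remains in $\mathfrak{N}_{m}(I, c)$. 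If $\dim T = 0$, then $\rho(X_0) = 1$ and we land in $\mathfrak{K}_{m}(I, c)$. If $\dim T > 0$, restricting to a general fiber (or descending to $T$ by a canonical-bundle-formula type adjunction for generalized pairs) yields a pair of smaller dimension in $\mathfrak{N}_{\leq m-1}(I, c)$, and iteration concludes.

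The main obstacle is the bookkeeping in the two places where condition ($\dagger$)(4) is fragile: (i) the adjunction step when $M \equiv 0$, where the $D_c(I)$-coefficient on $B'$ must be propagated to $B_{S'}$ by choosing $S'$ to meet that component; and (ii) the MMP step, where one must verify that $K+B+cM$ remains numerically trivial and that at least one coefficient in $D_c(I)$ survives all divisorial contractions and flips. Both can be handled by the fact that the contracted extremal rays are $B_{i_0}'$-positive and numerically trivial against $K+B+cM$, together with the closure properties recorded in \eqref{eq: D(I)}; but this verification is where the bulk of the work lies.
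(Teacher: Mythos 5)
Your overall skeleton (dlt-type modification via Proposition \ref{prop: dlt}, adjunction to a component of $\lfloor B' \rfloor$, MMP to a Mori fibre space, induction on dimension) matches the paper's, but the two places you yourself flag as ``bookkeeping'' are exactly where the proposal breaks, and your fixes do not work as stated. First, the MMP step: you run a $(K_{X'}+B'-\epsilon B_{i_0}'+cM')$-MMP, i.e.\ a $(-\epsilon B_{i_0}')$-MMP. This already fails when $B'=0$ (then $M\not\equiv 0$ by ($\dagger$)(4) and there is no component to perturb with), and more seriously it gives no control of $M$ on the fibres: on a general fibre $F$ of the resulting Mori fibre space the nef part $M$ may become numerically trivial while no coefficient of $B_0|_F$ lies in $D_c(I)$, so condition ($\dagger$)(4) fails and the induction cannot proceed; restriction to a general fibre, unlike divisorial adjunction, produces no new coefficients, so Lemma \ref{le: trivial nef part} cannot rescue this. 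The paper avoids this precisely by running the MMP on $K_{X''}+B''$, which (since $K+B''+cM''\equiv 0$) is a $(-cM'')$-MMP: then $M_Y$ is positive on the curves contracted to $Z$, so $M$ remains nontrivial on the fibre and ($\dagger$)(4) is vacuous there; and the whole case $M'\equiv 0$ is delegated to \cite[Lemma 11.4]{HMX14} rather than re-proved.

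Second, in the non-klt case you perform adjunction \emph{before} reducing the Picard number, and this leaves an uncovered case: $M\not\equiv 0$ globally but $M'|_{S'}\equiv 0$ and $M|_S\equiv 0$ (possible when $\rho(X')>1$), where Lemma \ref{le: trivial nef part} does not apply and no $D_c(I)$ coefficient is guaranteed on $S'$; likewise, when $M\equiv 0$ your fix ``choose $S'$ to meet a component with coefficient in $D_c(I)$'' may be impossible, since $\lfloor B' \rfloor$ can be disjoint from every such component. The paper's ordering matters: it first runs the $(-cM'')$-MMP and only restricts to a divisor in situations where nontriviality of $M$ (or $M''$) on it is forced --- either because $\rho(Y)=1$ and $M_Y\not\equiv 0$, or, when the reduced part is contracted, because the negativity lemma gives covering curves $C$ of $S''$ with $M''\cdot C>0$ --- and only then invokes Lemma \ref{le: trivial nef part}. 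Finally, your base case is off: you take $m=0$ ``by definition,'' but adjunction from a curve to a point is not defined in this framework ($\mathfrak{N}_0$ does not exist), so dimension $1$ with $\lfloor B'\rfloor\neq 0$ needs a separate argument; the paper handles $n=1$ by explicit constructions on $\Pp^1$ (replacing each coefficient-$1$ point by two points of coefficient $\tfrac12$, and, when the $D_c(I)$ coefficient equals $1$, writing $c=\tfrac{1-f}{k}$ and exhibiting a klt pair with $M'=kq$), which your plan has no mechanism to reproduce.
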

\begin{proof}
The ``$\supseteq$'' is by definition, we only need to show the inclusion ``$\subseteq$''. We do induction on $n$.

\medskip

When $n=1$, for $(X', B'+cM') \in \mathfrak N_1(I,c)$, $X'$ is an elliptic curve or $\Pp^1$. In the former case, $B'=0, M' \equiv 0$, $(X', B'+cM')$ is generalized klt, and $\rho(X')=1$. Thus $(X', B'+cM') \in \mathfrak K_1(I,c)$ and $c \in K_1(I)$. When $X'=\Pp^1$, for $(X', B'+cM') \in \mathfrak N_1(I,c)$, if $(X', B'+cM')$ is generalized lc but not generalized klt, then at least one coefficient of $B'$ is $1$. By $0 \in I_+$, $\frac 1 2 \in D(I)$. If $p_1,p_2,p_3,p_4, q \in X'$ are distinct closed points, then for any $c \in \Rr_{\geq 0}$, $(X', B'+cM') \in \mathfrak K_1(I,c)$ for $B' = \frac 1 2(p_1+p_2+p_3+p_4)$ and $M'=cq$. In particular, $c=0 \in K_1(I)$. We can assume that $c>0$.

\medskip

First, we consider the case $M \not\equiv 0$. Write $B'=p+B''$, where $p$ has coefficient $1$. Choose $p_1, p_2 \in X'$ to be two closed points different from $\Supp B''$, then
\begin{equation}\label{eq: B'}
B'=p+B''\equiv \frac 1 2 p_1+\frac 1 2 p_2+B''.
\end{equation} We can continue this process if there exists a component of $B''$ whose coefficient is $1$. By doing this, we get $\tilde B' \in D(I) \cup D_c(I)$ such that $B'\equiv \tilde B'$ and $(X', \tilde B'+cM')$ is generalized klt. Thus $c \in K_1(I)$. 

\medskip

Next, we consider the case $M \equiv 0$. Then $B'$ has a coefficient $\frac{m-1+f+kc}{m}$ with $k\in\Nn$. If $\frac{m-1+f+kc}{m}<1$, then we can do the same thing as \eqref{eq: B'} and obtain a $\tilde B'$ so that $(X', \tilde B'+cM')$ is generalized klt with $\frac{m-1+f+kc}{m}$ to be a coefficient of $\tilde B'$. Thus $c \in K_1(I)$. If $\frac{m-1+f+kc}{m}=1$, then $c=\frac{1-f}{k}>0$. Consider $B'=\frac 1 2 p_1+\frac 1 2 p_2+f p_3$ and $M'=kq$, then $(X', B'+cM') \in \mathfrak K_1(I,c)$ for $c=\frac{1-f}{k}$. Thus  $c=\frac{1-f}{k} \in K_1(I)$. This completes the $n=1$ case.

\medskip

From now on, assume that the inclusion holds for any dimension less than $n$.

\medskip

Let $(X', B'+cM') \in \mathfrak{N}_n(I,c)$. If $M' \equiv 0$, then this is just \cite[Lemma 11.4]{HMX14}, hence we can assume that $M' \not\equiv 0$.

\medskip

By Proposition \ref{prop: dlt}, there exists a generalized lc pair $(X'', B''+cM'')$ such that 
\[
K_{X''}+B''+cM'' =f^*(K_{X'}+ B'+cM') \equiv 0,
\] with $(X'',0)$ has klt singularities, hence it satisfies property (ii) in \S 2.3. Let $T''=\lfloor B'' \rfloor$. If $T''=0$, then $(X'', B''+cM'')$ is generalized klt by Proposition \ref{prop: dlt} (5). Moreover, $M''$ can be obtained as a push-forward of a Cartier divisor from a common resolution. Thus $(X'', B''+cM'') \in \mathfrak{N}_n(I,c)$. 

\medskip

By Lemma \ref{lem: MMP} (or just \cite{BCHM10}), we can run a $(K_{X''}+B'')$-MMP with scaling, which is the same as a $(-cM'')$-MMP. As $-cM''$ is not pseudo-effective, we can obtain a Mori fibre space $Y \to Z$, and $-M''$ is not contracted in this MMP. Let $(Y, B_Y + cM_Y)$ be the log pair obtained above. There are three possibilities. 

\medskip

\noindent Case (1). When $T''=0$, then $(Y, B_Y + cM_Y)$ is a generalized pair with generalized klt singularities. Taking a general fibre $F$ and restricting to $F$, we have 
\[
K_F + B_Y|_F + cM_Y|_F \equiv 0,
\] with $M_Y|_F\not\equiv 0$ (because $M_Y$ is positively intersects with curves contracted to $Z$). $(F, B_Y|_F+cM_Y|_F)$ is still generalized klt with $M_Y|_F$ a push-forward of a Cartier divisor. When $\dim Z>0$, then $\dim F<\dim X$, and we get the result by induction hypothesis. Otherwise, $F=Y$ has Picard number $1$, and thus $(F, B_Y|_F + cM_Y|_F) \in \mathfrak{K}_n(I)$. Hence $c\in K_n(I)$.

\medskip

\noindent Case (2). When $T'' \neq 0$, and $T''$ is not contracted in $X'' \to Y$. We do the same thing as Case (1) when $\dim Z>0$, and obtain $(F, B_Y|_F + cM_Y|_F) \in \mathfrak{N}_{\dim F}(I)$, hence the result holds by induction hypothesis. When $\dim Z =0$, then $\rho(Y)=1$, and as $M_Y \not\equiv 0$, $M_Y|_{S_Y} \not\equiv 0$, where $S_Y$ is an irreducible component of the push-forward of $T''$. Hence we do the generalized adjunction on the normalization $S$ of $S_Y$,
\[
(K_{Y}+B_Y+cM_Y)|_{S} = K_S+B_S+cM_S \equiv 0.
\] By generalized adjunction and \eqref{eq: D(I)}, $B_S \in D(D(I)\cup D_c(I)) \subseteq D(I)\cup D_c(I)$. Either $M_{S} \not\equiv 0$, or $M_{S} \equiv 0$ but $M_Y|_S \neq 0$. In the later case, by Lemma \ref{le: trivial nef part}, we have at least one component of $B_S$ has coefficient in $D_c(D(I)\cup D_c(I))\subseteq D_c(I)$. Hence, for both cases, we are done by induction hypothesis.

\medskip

\noindent Case (3). When $T''$ is contracted in some step of the above MMP. Without loss of generality, we can assume that a component $S''$ is contracted in $X'' \to Y'$. Thus, by the negativity lemma, $S''$ is a covering family of curves $\{C\}$ such that $S'' \cdot C<0$. But $M''\cdot C>0$ as we run $(-cM'')$-MMP, and thus $M''|_{S''} \not\equiv 0$. Hence as Case (2), we do adjunction on the normalization of $S''$ and complete the argument by induction hypothesis.
\end{proof}

\begin{proposition}\label{prop: limit in numerically trivial}
Assume $1 \in I$, then $\lim^1 \PET_n(I) \subseteq \lim^1 N_n(I)$. 
\end{proposition}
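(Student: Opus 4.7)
The strategy is to establish the stronger inclusion $\PET_n(I) \subseteq N_n(I)$, from which $\lim^1 \PET_n(I) \subseteq \lim^1 N_n(I)$ is immediate (accumulation points of a subset are accumulation points of the ambient set). So I fix $c = \pet(X, \Delta; M) \in \PET_n(I)$ and aim to produce an element of $\mathfrak{N}_m(I, c)$ for some $m \leq n$. The case $c = 0$ is covered by $N_0(I) \subseteq N_n(I)$ and \eqref{eq: N_0}, so from now on assume $c > 0$; then $K_X + \Delta$ is not pseudo-effective, $K_X + \Delta + cM$ is pseudo-effective, and $K_X + \Delta + (c - \epsilon)M$ fails to be pseudo-effective for every $\epsilon > 0$.

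View $(X, \Delta + cM)$ as a generalized polarized pair with boundary $\Delta$ and nef part $cM$; since $(X, \Delta)$ is lc and $M$ is nef Cartier, the pair is generalized lc. If already $K_X + \Delta + cM \equiv 0$, then $(X, \Delta + cM) \in \mathfrak{N}_n(I, c)$: we have $\Delta \in I \subseteq D(I)$, $M$ itself is Cartier giving condition $(\dagger)(3)$, and $(\dagger)(4)$ is vacuous because $M$ is big and hence not numerically trivial. This yields $c \in N_n(I)$ and closes the trivial case.

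Otherwise, apply Proposition \ref{prop: dlt} to pass to a $\Qq$-factorial crepant model $(\tilde X, \tilde B + c\tilde M)$ with $(\tilde X, 0)$ klt; the coefficients of $\tilde B$ lie in $I \cup \{1\} \subseteq D(I)$. The divisor $K_{\tilde X} + \tilde B + (c - \epsilon)\tilde M$ fails to be pseudo-effective for small $\epsilon > 0$, so by Lemma \ref{lem: MMP}(1) (condition (ii) applies) we run a generalized MMP with scaling on this divisor, terminating in a Mori fibration $\pi: Y \to T$ with $\dim T < \dim Y$. On a general fiber $F$ the relative Picard rank is $1$: the restriction $(K_Y + B_Y + (c-\epsilon)M_Y)|_F$ is anti-ample, whereas $(K_Y + B_Y + cM_Y)|_F$ is pseudo-effective (pseudo-effectivity is preserved under flips and divisorial contractions and restricts to general fibers). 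The two classes differ by $\epsilon M_Y|_F$ and lie in the one-dimensional numerical cone of $F$, so $(K_Y + B_Y + cM_Y)|_F \equiv 0$ and $M_Y|_F \not\equiv 0$ (otherwise the supposedly pseudo-effective class would coincide with the anti-ample one).

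Setting $F' = F$, $B' = B_Y|_F$, $M' = M_Y|_F$, the pair $(F', B' + cM')$ is generalized lc by adjunction to a general fiber, the boundary coefficients remain in $D(I)$, and the nef part is the pushforward of the restriction of the Cartier divisor $M$ along a common resolution of $\tilde X$ and $Y$. Since $M' \not\equiv 0$, condition $(\dagger)(4)$ is vacuous and $(F', B' + cM') \in \mathfrak{N}_m(I, c)$ with $m = \dim F \leq n - 1$, giving $c \in N_n(I)$. The main obstacle is the analysis on the Mori fiber --- namely, verifying $(K_Y + B_Y + cM_Y)|_F \equiv 0$ and $M_Y|_F \not\equiv 0$, which together exploit that $c$ is the \emph{exact} pseudo-effective threshold and the relative Picard rank one structure; a secondary technical point is to ensure the nef part remains Cartier-sourced throughout the generalized MMP by fixing a common resolution from the outset.
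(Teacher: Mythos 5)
There is a genuine gap at the decisive step. You run a $(K_{\tilde X}+\tilde B+(c-\epsilon)\tilde M)$-MMP to a Mori fibre space $Y\to T$ and claim that on a general fibre $F$ one gets $(K_Y+B_Y+cM_Y)|_F\equiv 0$. This does not follow. The restrictions of divisor classes from $Y$ to $F$ do span at most a line in $N^1(F)$ generated by an ample class $A$, so $(K_Y+B_Y+(c-\epsilon)M_Y)|_F=-aA$ with $a>0$ and $M_Y|_F=bA$ with $b\geq 0$; pseudo-effectivity of $(K_Y+B_Y+cM_Y)|_F=(\epsilon b-a)A$ forces $b>0$ (your conclusion $M_Y|_F\not\equiv 0$ is correct) and $\epsilon b\geq a$, but nothing rules out $\epsilon b>a$, i.e.\ the coefficient-$c$ restriction can be ample rather than trivial. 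The exactness of the threshold $c$ lives on $X$, not on the fibre of an MMP run at level $c-\epsilon$: all this construction yields is some $t_0\in(c-\epsilon,c]$ at which the restriction is numerically trivial, and $t_0$ need not be $c$. This is precisely why the statement carries $\lim^1$ on both sides rather than the containment $\PET_n(I)\subseteq N_n(I)$ you set out to prove: the paper never realizes $c$ itself. Instead it perturbs the boundary by an ample $\epsilon_iA_i$ to make the pair klt, runs the MMP at the \emph{perturbed} threshold $c_i'$, where Lemma \ref{lem: MMP}(2) (which needs generalized klt, hence the perturbation you skipped) gives a minimal model on which the divisor is nef, not big, and therefore semi-ample, so the induced fibration has numerically trivial restriction at $c_i'$; it then produces thresholds $\tau_i$ with $c_i\geq\tau_i>c_i'\geq c_i-\delta_i$ and concludes $c\in\lim^1 N_n(I)$ by induction. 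With lc (non-klt) input this perturbation unavoidably moves the threshold, which is why only accumulation points are captured.

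A second unproved assertion is that $(F,B_Y|_F+cM_Y|_F)$ is generalized lc ``by adjunction to a general fibre''. The MMP steps are negative for $K+\tilde B+(c-\epsilon)\tilde M$ but may well be positive for $\tilde M$, so generalized lc-ness at the larger coefficient $c$ can be destroyed along the way; one only keeps lc-ness at the coefficient actually used in the MMP. The paper spends Steps 3 and 4 of its proof (and the parallel Step 5 of Proposition \ref{prop: induction of accumulation point}) exactly on this failure: it passes to the generalized lc threshold $\tau_i'$, extracts a generalized lc place via Proposition \ref{lem: dlt for klt}, and restricts to a general fibre over the lc centre to stay within condition ($\dagger$). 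Both points — realizing numerical triviality only at nearby values, and repairing lc-ness at the larger coefficient — are where the substance of the argument lies, so the proposal as written does not establish the proposition.
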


\begin{proof} 
We proof the claim by induction on $n$. For $n=1$, $\PET_1(I) \subseteq N_1(I)$ by definition, hence the claim holds. We can assume that the claim holds for any dimension less than $n$.

\medskip

\noindent Step 1. Let $c_i= \pet (X_i, \Delta_i; M_i)$. Suppose that $\{c_i\}_{i\in\Nn}$ has an accumulation point $c$. We can assume that $c_i>0$. Taking a dlt modification $f_i$ of $(X_i, \Delta_i)$
\[
K_{W_i}+ \Delta_{W_i}+T_i = f^*_i(K_{X_i}+\Delta_i),
\] where $T_i$ is a reduced divisor, $W_i$ is $\Qq$-factorial and $({W_i}, \Delta_{W_i}+T_i)$ is dlt. Moreover, $c_i$ is the same as the pseudo-effective threshold of $f_i^*M_i$ with respect to $K_{W_i}+ \Delta_{W_i}+T_i$. Replacing by $({W_i}, \Delta_{W_i}+T_i)$, we can assume that $X_i$ is $\Qq$-factorial and $(X_i, \Delta_i)$ is dlt. 

\medskip

\noindent Step 2. Choose an ample divisor $A_i$ and $1 \gg \epsilon_i>0$ such that $(X_i, \Delta_i+ \epsilon_i A_i)$ is lc. Then there exists a $\tilde \Delta_i$ satisfying
\[
K_{X_i}+ \Delta_i +\epsilon_i A_i \sim_\Rr K_{X_i}+\tilde \Delta_i
\] such that $(X_i, \tilde \Delta_i)$ is klt. By Lemma \ref{le: perturbation of threshold}, for any $1 \gg \delta_i>0$, we can find a sufficiently small $\epsilon_i>0$, such that
\[
c_{i} > c_i' \coloneqq \pet(X_i, \Delta_i + \epsilon_i A_i; M_i) \geq c_i-\delta_i.
\] 

Because $M_i$ is nef, $(X_i, \tilde \Delta_i+tM_i)$ is generalized klt for any $t \geq 0$ with data $X_i \xrightarrow{\rm id} X_i$ and $M_i$. In the same way, $(X_i, \Delta_i+\epsilon A_i+tM_i)$ is generalized lc. By Lemma \ref{lem: MMP} (2), we can run a $(K_{X_i} + \tilde \Delta_i + c'_i M_i)$-MMP, $X_i \dashrightarrow Y_i$.  As $K_{X_i}  + \tilde \Delta_i + c'_i M_i$ is pseudo-effective, $Y_i$ is birational to $X_i$. Moreover, $K_{Y_i} +\tilde \Delta_{Y_i} + c'_i M_{Y_i}$ is nef but not big as $c'_i$ is the pseudo-effective threshold. Then by Lemma \ref{lem: MMP} (2) again, $K_{Y_i} + \tilde \Delta_{Y_i} + c'_i M_{Y_i}$ is semi-ample, and thus it defines a fibration $g_i: Y_i \to Z_i$. Taking a general fibre $F_i$ (if $Z_i$ is a point, then $F_i=Y_i$), and restricting to $F_i$, we get
\begin{equation}\label{eq: on fibre}
(K_{Y_i} + \tilde \Delta_{Y_i} + c'_i M_{Y_i})|_{F_i} = K_{F_i}+ \tilde\Delta_{Y_i}|_{F_i} + c'_i M_{Y_i}|_{F_i} \equiv 0.
\end{equation} Notice that because $M_i$ is big (by the definition of $\PET_n(I)$), $M_{Y_i}|_{F_i}$ is big. Moreover, as $(X_i, \Delta_i + \epsilon A_i+c'_iM_i)$ is generalized lc, $(Y_i, \Delta_{Y_i}+\epsilon A_{Y_i}+c'_iM_{Y_i})$ is also generalized lc, where $\Delta_{Y_i}, A_{Y_i}$ are push-forwards of $\Delta_i, A_i$. In fact, the MMP above is also an MMP on 
$K_{X_i}+\Delta_{i}+\epsilon A_{i}+c'_iM_{i}$. The restriction to a general fibre preserves the generalized lc property. Moreover, as $K_{Y_i}+\Delta_{Y_i}+c_iM_{Y_i}$ is the push-forward of $K_{X_i}+\Delta_i+c_iM_i$, it is pseudo-effective and so is $K_{F_i}+\Delta_{Y_i}|_{F_i}+c_i M_{Y_i}|_{F_i}$. By \eqref{eq: on fibre},
\begin{equation}\label{eq: inequality}
c_i \geq \tau_i \coloneqq \pet(F_i, \Delta_{Y_i}|_{F_i}; M_{Y_i}|_{F_i}) > c_i' \geq c_i - \delta_i.
\end{equation} In particular, $\lim_{i \to \infty} \tau_i = \lim_{i \to \infty} c_i$. Moreover, $(F_i, \Delta_{Y_i}|_{F_i}+c_i' M_{Y_i}|_{F_i})$ is generalized lc.

\medskip

\noindent Step 3. First suppose that $\dim F_i <\dim Y_i$. If $(F_i, \Delta_{Y_i}|_{F_i}+\tau_i M_{Y_i}|_{F_i})$ is generalized lc, then the result follows from the induction hypothesis. If $(F_i, \Delta_{Y_i}|_{F_i}+\tau_i M_{Y_i}|_{F_i})$ is not generalized lc, let $\tau_i'$ be the generalized log canonical threshold of $M_{Y_i}|_{F_i}$ with respect to $(F_i, \Delta_{Y_i}|_{F_i})$, then $\tau_i > \tau_i' \geq c_i'$ ($\tau_i' \geq c_i'$ because $(F_i, \Delta_{Y_i}|_{F_i}+c_i'M_{Y_i}|_{F_i})$ is generalized lc). By Proposition \ref{lem: dlt for klt}, and notice that $(F_i, 0)$ is klt, we can extract a generalized lc place $S_i$ such that
\[
K_{F''_i} +S_i+ \Delta_{F''_i}+\tau_i'M_{F''_i} = p^*(K_{F_i}+\Delta_{Y_i}|_{F_i}+\tau_i'M_{Y_i}|_{F'_i}),
\] where $\Delta_{F''_i}$ is the strict transform of $\Delta_{Y_i}|_{F_i}$. Moreover, the relative Picard number $\rho(F_i''/F_i')=1$ and $S_i$ is anti-ample/$F_i'$. Suppose that $V_i$ is the generalized lc center of $S_i$ with $\tilde V_i$ its normalization. Let $S_i \to \tilde V_i$ be the corresponding morphism, and $\Upxi_i$ be a general fibre. Then we have
\[
K_{\Upxi_i}+\Delta_{\Upxi_i}+\tau_i'M_{\Upxi_i} = ((K_{F''_i} +S_i+ \Delta_{F''_i}+\tau_i'M_{F''_i})_{S_i})|_{\Upxi_i} \equiv 0.
\] For $(\Upxi_i, \Delta_{\Upxi_i}+\tau_i'M_{\Upxi_i})$, it could happen that $M_{\Upxi_i}\equiv 0$. But if this is the case, then at least one component of $\Delta_{\Upxi_i}$ lies in $D_{\tau'_i}(I)$. The verification of the claim is identical to that in Step 5 of Proposition \ref{prop: induction of accumulation point} (from \eqref{eq: numerically trivial} to the end, especially Case (b1)), and thus we leave the details to that argument. Now $({\Upxi_i}, \Delta_{\Upxi_i}+\tau_i'M_{\Upxi_i})$ is a generalized lc pair satisfying condition ($\dagger$) such that $\lim_{i \to \infty} \tau_i' = c$. Then by the induction hypothesis, we are done.

\medskip

\noindent Step 4. Now, suppose that $F_i = Y_i$, then $\rho(Y_i)=1$, and $K_{Y_i}+\Delta_{Y_i}+\tau_iM_{Y_i} \equiv 0$. If $({Y_i}, \Delta_{Y_i}+\tau_iM_{Y_i})$ is generalized lc, then $\tau_i \in N_n(I)$. Otherwise, let $\tau_i'$ be the generalized lc threshold of $M_{Y_i}$ with respect to $(Y_i, \Delta_{Y_i})$. Again, $\tau_i>\tau_i' \geq c_i'$. Then the same argument as Step 3 gives the desired result.
\end{proof}

\begin{proposition}\label{prop: induction of accumulation point}
Let $I$ be a DCC set such that $I=I_+$. Assume that $1\in I$ with $1$ the only possible accumulation point. Let $n \in \Nn$ be a fixed integer, and $\{c_i\}_{i\in \Nn}$ be a strictly decreasing sequence with limit $c, c> 0$. Suppose that $(X'_i, \Delta'_i)$ satisfies the following conditions.
\begin{enumerate}
\item $\dim X_i'=n$,
\item $K_{X'_i} + \Delta'_i \equiv 0$ with $\Delta'_i = A'_i + B'_i + c_iM'_i$, 
\item $(X'_i, A'_i + B'_i + c_i M'_i)$ is generalized lc with $A'_i + B'_i $ the boundary part and $c_i M'_i$ the nef part, $M_i'$ is a $\Qq$-Cartier divisor,
\item the coefficients of $A'_i$ are either $0$ or approaching $1$,
\item $B'_i \in D(I) \cup D_{c_i}(I)$, and
\item $M'_i$ is a push-forward of a nef Cartier divisor $M_i$, such that either $M_i' \not\equiv 0$, or when $M_i' \equiv 0$, at least one component of $B'_i$ lies in $D_{c_i}(I)$.
\end{enumerate}
Then $c \in N_{n-1}(I)$.
\end{proposition}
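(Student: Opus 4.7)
The plan is to produce, for the limit $c$, a generalized lc pair of dimension at most $n-1$ satisfying condition $(\dagger)$ at threshold $c$, thereby certifying $c \in N_{n-1}(I)$. The approach has three stages: dlt modification together with MMP, dimension reduction via restriction or adjunction, and extraction of a limit pair using Global ACC together with BAB.

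First I would, via Proposition \ref{prop: dlt}, replace each $(X'_i, A'_i + B'_i + c_i M'_i)$ by a $\Qq$-factorial generalized dlt modification; then $(X'_i, 0)$ is klt, so Lemma \ref{lem: MMP} applies. In the principal case $M'_i \not\equiv 0$, I would exploit the small negativity
\[
K_{X'_i} + A'_i + B'_i + c M'_i \equiv -(c_i - c) M'_i,
\]
which is anti-nef and non-zero, hence not pseudo-effective. Running the associated $(K+A+B+cM)$-MMP with scaling terminates by Lemma \ref{lem: MMP}(1) in a generalized Mori fibre space $\pi_i: Y_i \to Z_i$ on which $M_{Y_i}$ is $\pi_i$-ample, while the original relation $K_{Y_i}+A_{Y_i}+B_{Y_i}+c_i M_{Y_i}\equiv 0$ persists and stays generalized lc. The subsidiary case $M'_i \equiv 0$ is handled analogously by running an MMP on $K_{X'_i}+A'_i+B'_i$ directly, with hypothesis (6) already placing the $c_i$-dependence inside a coefficient of $B'_i$.

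Next I would drop the dimension. If $\dim Z_i \geq 1$, restrict to a general fibre $F_i$ of dimension at most $n-1$; numerical triviality and the generalized lc property pass to $F_i$. If instead $\dim Z_i = 0$, then $\rho(Y_i)=1$, and the approach-to-1 of the coefficients of $A'_i$ combined with Theorem \ref{thm: ACC for glct} produces a generalized lc place, which Proposition \ref{lem: dlt for klt} extracts; generalized adjunction along it then descends the dimension. By the coefficient formula \eqref{eq: coefficients} and \eqref{eq: D(I)}, the resulting boundary lies in $D(I)\cup D_{c_i}(I)$, and when the restricted nef part becomes trivial Lemma \ref{le: trivial nef part} supplies a coefficient in $D_{c_i}(I)$. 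This yields pairs $(W_i, \Gamma_i + c_i N_i) \in \mathfrak{N}_{m_i}(I, c_i)$ with $m_i \leq n-1$.

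Finally I would pass to the limit. Applying Theorem \ref{thm: global ACC} with the DCC set $\Lambda = I \cup \{c_j\}_{j\geq 1}$, the boundary coefficients and, when $N_i \not\equiv 0$, the nef-multiplicity $c_i$ are confined to a finite subset of $\Lambda$. Combined with Theorem \ref{thm: BAB}, after passing to a subsequence the varieties $W_i$, boundaries $\Gamma_i$, and divisor classes of $N_i$ stabilize, so the numerical-triviality equation forces a well-defined limit pair $(W, \Gamma + cN)$ with $K_W+\Gamma+cN\equiv 0$. Tracking the coefficient structure under $c_i \to c$ via \eqref{eq: coefficients} shows the limit boundary lies in $D(I)\cup D_c(I)$, and if $N\equiv 0$ the use of Lemma \ref{le: trivial nef part} in the reduction step ensures a coefficient of $\Gamma$ is in $D_c(I)$. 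Thus condition $(\dagger)$ holds for threshold $c$ in dimension at most $n-1$, which is exactly $c \in N_{n-1}(I)$.

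\emph{Main obstacle.} The principal difficulty is the limit step: reconciling the strict decrease $c_i \searrow c$ with the combinatorial rigidity of Global ACC, and extracting a genuine limit pair with threshold exactly $c$ rather than some $c_i$. This hinges on Theorem \ref{thm: BAB} to fix the ambient geometry and on the continuous behaviour of coefficients of the form $\tfrac{m-1+f+k c_i}{m}$ as $c_i \to c$, together with careful bookkeeping of how the restriction sub-case where $N_i \equiv 0$ is forced by Lemma \ref{le: trivial nef part} to carry a $D_{c_i}(I)$-coefficient.
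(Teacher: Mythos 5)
The overall skeleton of your first two stages (dlt modification, MMP to a Mori fibre space, restriction/adjunction to drop dimension) matches the paper, but your final ``limit step'' is the heart of the matter, and it has a genuine gap. You invoke Theorem~\ref{thm: BAB} to claim that ``after passing to a subsequence the varieties $W_i$, boundaries $\Gamma_i$, and divisor classes of $N_i$ stabilize, so the numerical-triviality equation forces a well-defined limit pair.'' But Theorem~\ref{thm: BAB} requires a uniform $\epsilon$-lc bound, and no such bound is available: in fact the paper's Step~2 proves the \emph{opposite}, namely that the pairs $(X'_i,\Delta'_i)$ cannot be generalized $\epsilon$-lc for any fixed $\epsilon>0$ (because if they were, BAB plus the discreteness of intersection numbers against a very ample divisor would force the $c_i$ into a finite set, contradicting $c_i\searrow c$). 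So the role of BAB is inverted: it is a contradiction tool to \emph{exclude} boundedness, not a compactness tool to extract a limit. The paper uses the resulting failure of $\epsilon$-lc to manufacture a divisor $A_i$ with coefficient approaching $1$ (Step~3), which is then the lever for adjunction; your argument never produces such an $A_i$ when the hypothesis gives $A'_i=0$, which is allowed by condition (4).

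Beyond that, even granting ``stabilization,'' the end of your argument would not yield an element of $N_{n-1}(I)$: members of a bounded family do not literally stabilize, Global ACC applied to $(W_i,\Gamma_i+c_iN_i)$ gives a contradiction (or forces $N_i\equiv 0$) rather than a pair at threshold $c$, and turning coefficients $\tfrac{m-1+f+kc_i}{m}$ into $\tfrac{m-1+f+kc}{m}$ on a single fixed model is exactly what needs to be justified, not asserted. The paper resolves this by a genuine induction on $n$: after adjunction one obtains a family of pairs in dimension $\leq n-1$ satisfying the \emph{same} hypotheses as the proposition (with $c_i$ or $c_i'$ still strictly decreasing to $c$), and the inductive hypothesis delivers $c\in N_{n-2}(I)\subseteq N_{n-1}(I)$. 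The base case $n=1$ is a direct arithmetic computation on $\Pp^1$ using the numerical-triviality identity~\eqref{eq: sum=0}. Your proposal has no base case and no inductive hypothesis, so it has no mechanism to close the argument. To repair it, you would need to (i) replace the limit step by a reduction of dimension followed by an appeal to the inductive hypothesis, (ii) include the base case $n=1$, and (iii) add the Step~2/Step~3 argument that uses BAB to rule out $\epsilon$-lc and thereby produce $A_i\neq 0$ when $A'_i=0$.
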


\begin{proof}
We do induction on $n$. 

\medskip

First, for the base case $n=1$, by $c>0$, we have $X_i'=\Pp^1$. Let $a_i = \deg A_i', l_i =\deg M_i'$ and $\frac{m_i^{(j)}-1+f_i^{(j)}+k_i^{(j)}c_i}{m_i^{(j)}}$ be a coefficient of $B_i'$, then
\begin{equation}\label{eq: sum=0}
-2+a_i+\sum_j \frac{m_i^{(j)}-1+f_i^{(j)}+k_i^{(j)}c_i}{m_i^{(j)}}+l_i c_i=0.
\end{equation} Notice that $\lim_{i\to +\infty} a_i = 0,1$ by assumption (4) and $c>0$, $l_i \in \Zz_{\geq 0}$ and if $l_i=0$, then there exists $k_i^{(j)}>0$ by assumption (6). By $c>0$, $k_i^{(j)}, l_i$ are bounded above, and thus by passing to a subsequence, we can assume that they are equal to $k$ and $l$ respectively. Because $c$ is an accumulation point, by passing to a subsequence, we can assume that $c_i$ are distinct. Hence if $\lim_{i\to +\infty} a_i = 0$ (i.e. $\deg A_i=0$), then $m_i^{(j)}$ or $f_i^{(j)}$ does not belong to a finite set. Otherwise, $c_i$ also belongs to a finite set. Because $1$ is the only possible accumulation point of $I=I_+$, if $f_i^{(j)}$ does not belong to a finite set, then by passing to a subsequence, $\lim_{i \to +\infty}f_i^{(j)}=1$. Thus one coefficient $\frac{m_i^{(j)}-1+f_i^{(j)}+k_i^{(j)}c_i}{m_i^{(j)}}$ of $B_i'$ has limit $1$. Taking $i \to +\infty$, we have
\[
-1+lc=0~\text{or}~-1+\frac{m-1+f+kc}{m}+lc=0,
\] where $f \in I_+=I$, $m \in \Nn$, $k,l\in \Zz_{\geq 0}$ and at least one of $k, l$ is non-zero. Thus $c=\frac 1 l$ or $\frac{1-f}{k+ml}$, which belongs to $N_0(I)$ (see \eqref{eq: N_0}). This shows $n=1$ case. From now on, we assume that the result holds for any dimension less than $n$.

\medskip

Next, the problem can be simplified by the following. The $M_i' \equiv 0$ case is just \cite[Proposition 11.7]{HMX14}. In fact, $M_i =f_i^*M_i'-E \equiv -E$ with $E \geq 0$ (see \eqref{eq: negativity}). If $E>0$, then $M_i$ cannot be nef. Thus $M_i \equiv 0$. Besides, according to (6), at least a component of $B_i'$ lies in $D_{c_i}(I)$, by $\frac{m_i-1+f_i+k_ic_i}{m_i} \leq 1$, we have $c_i\leq 1$. Notice that the $B_i+C_i$ in \cite[Proposition 11.7]{HMX14}  is $B_i$ in our case. Hence, we assume $M_i'\not\equiv 0$ in the following.

\medskip

We can assume that $A'_i, B'_i$ do not have common components. Indeed, if there is a component of $B'_i$ which approaching $1$, then we add it to $A'_i$. Besides, we can assume that in the coefficients $\frac{m_i-1+f_i+k_ic_i}{m_i}$ of $B'_i$, $m_i$ are bounded. Because $1$ is the only possible accumulation point of $I=I_+$, if $f_i<1$ is strictly increasing with limit $f$, then $f=1$, and thus $\frac{m_i-1+f_i+k_ic_i}{m_i}$ approaching $1$. Hence, by passing to a subsequence, we can assume that those $f_i$ appearing in the coefficients of $B_i'$ are chosen from a finite set. 

\medskip

\noindent Step 1. In this step, we reduce the problem to $\Qq$-factorial Fano and Picard number $1$ case. 

\medskip

By Proposition \ref{prop: dlt}, there exists $f: X_i'' \to X_i'$ such that
\[
K_{X_i''}+A_i''+ B_i'' +  c_i M_i'' = f^*(K_{X'_i}+A_i'+B_i' +c_iM_i'),
\] and $(X_i'', A_i'' + B_i''+c_iM_i'')$ is still generalized lc, $X_i''$ is $\Qq$-factorial, and $(X_i'',0)$ is klt with $A_i'', B_i'' \in I$. We can run a $(K_{X_i''}+A_i''+ B_i'')$-MMP by Lemma \ref{lem: MMP} (1), $X_i \dashrightarrow Y_i$, which is the same as a $(-M_i'')$-MMP and thus we can assume that there exists a Mori fibre space $Y_i \to Z_i$. When $\dim Z_i>0$ for infinitely many $i$, we take a general fibre and restricting everything to this fibre. Then we complete the proof by induction hypothesis. Otherwise, we can assume $\dim Z_i=0$ for each $i$, and thus $\rho(Y_i)=1$. Replacing $(X'_i, \Delta'_i)$ by $(Y_i, {f_i}_*\Delta'_i)$, we can assume that $\rho(X'_i)=1$. In particular, we can assume that $X'_i$ is a $\Qq$-factorial Fano variety and $M'_i$ is big.

\medskip

\noindent Step 2. In this step, we show that there is no $\epsilon>0$ such that $(X'_i, \Delta'_i)$ is generalized $\epsilon$-lc for each $i$. 

\medskip

Suppose that there exists $\epsilon>0$ such that for infinitely many $i$, the total generalized log discrepancy of $(X'_i, \Delta'_i)$ is greater than $\epsilon$. As the total generalized log discrepancy of $(X'_i, A'_i+B'_i)$ is no less than that of $(X'_i, \Delta'_i)$, $(X'_i, A'_i+B'_i)$ is $\epsilon$-lc. By Theorem \ref{thm: BAB}, such $X'_i$ forms a bounded family. Moreover, as coefficient of $A'_i$ are either $0$ or approaching $1$, by passing to a tail, we can assume $A'_i=0$ (otherwise the total generalized log discrepancies will $<\epsilon$). The coefficients of $B'_i$ are of the form $\frac{m_i-1+f_i+k_ic_i}{m_i}$ with $f_i\in I_+$, by passing to a subsequence, we can assume that $m_i$ is fixed. As above, $f_i$ is chosen from a finite set. By considering a very ample divisor on the bounded family and by passing to a subsequence again, we see that $(K_{X'_i} + \Delta'_i)\cdot H^{n-1}=0$. Moreover,  $K_{X'_i}\cdot H^{n-1}, B'_i \cdot H^{n-1}$ are bounded. $M'_i\cdot H^{n-1}$ is also bounded because $\lim c_i =c \neq 0$ and $M'_i$ is a prime divisor as it is a push-forward of a Cartier divisor. Notice that if $D \subseteq X_i'$ is a Weil divisor and $H$ is a very ample divisor, then $D \cdot H^{n-1} \in \Zz$. For $\frac{m_i-1+f_i+k_ic_i}{m_i}$, those $m_i,k_i,f_i$ are bounded and thus are fixed by passing to a subsequence. This contradicts to  
the strictly decreasing of $c_i$ with limit $c\neq 0$. Thus we can assume that for any $\epsilon>0$, there exists $(X'_i, \Delta'_i)$ whose total generalized log discrepancy is less than $\epsilon$.

\medskip

\noindent Step 3. In this step, we show that $A_i'$ can be assumed to be non-zero. 

\medskip

If $A'_i=0$, then by Proposition \ref{prop: dlt}, the definition of generalized log discrepancy and the convention before Step 1, there exists $g_i: Y_i \to X_i'$ such that
\begin{equation}\label{eq: extract T}
K_{Y_i}+B_{Y_i}+E_i +a_iT_i+ c_iM_{Y_i}= g_i^*(K_{X'_i}+B'_i+c_iM'_i),
\end{equation}  where $E_i$ is a reduced divisor (possibly be $0$), and $1 \geq a_i\geq 1-\epsilon$. By putting $A_i=E_i +a_iT_i$, we can assume that $A_i \neq 0$. Moreover, when $\lfloor A_i \rfloor = 0$, $({Y_i}, B_{Y_i}+E_i +a_iT_i+ c_iM_{Y_i})$ is generalized klt. Replacing $(X_i', \Delta_i')$ by $(Y_i, B_{Y_i}+E_i+a_iT_i+c_iM_{Y_i})$, we can assume that $A_i \neq 0$, and when $\lfloor A_i \rfloor = 0$, $(X'_i, \Delta'_i)$ is generalized klt. Notice that in this case, $\rho(X_i')$ may not be $1$, but $M_i'$ is still big.

\medskip

Run a $(K_{X'_i}+B'_i+c_iM'_i)$-MMP, $f_i: X'_i \dashrightarrow Y_i$, and as $K_{X'_i}+B'_i+c_iM'_i\equiv -A'_i$ is not pseudo-effective, we can assume that the MMP terminates with a Mori fibre space $Y_i \to Z_i$. Let $F_i$ be a general fibre, because $M'_i$ is big, ${f_i}_*M'_i, {f_i}_*M'_i|_{F_i} \neq 0$. If $\dim Z_i>0$, then we are done. Thus we can assume that $\dim Z_i =0$, and thus $\rho(Y_i)=1$. Replacing by $Y_i, {f_i}_*A'_i, {f_i}_*B'_i, {f_i}_*M'_i$ , we can assume that $\rho(X_i')=1$. Moreover, $A'_i$ is not contracted by $f_i$ because we run a $(-A'_i)$-MMP.

\medskip

\noindent Step 4. In this step, we show the claim when $\lfloor A'_i \rfloor \neq 0$. 

\medskip

Suppose $\lfloor A'_i \rfloor \neq 0$, let $S'_i$ be the normalization of an irreducible component of $\lfloor A'_i \rfloor$. Because $\rho(X'_i)=1$, $M'_i|_{S_i'} \not \equiv 0$. By generalized adjunction (see Definition \ref{def: adjunction})
\[
(K_{X'_i}+A'_i+B'_i+c_iM'_i)|_{S'_i} = K_{S'_i}+A_{S_i'}+B_{S'_i}+c_iM_{S'_i} \equiv 0.
\] To be precise, suppose that $h_i: X_i \to X_i'$ is a log resolution, then 
\[
(K_{X_i}+A_i+B_i+c_iM_i) = h_i^*(K_{X'_i}+A'_i+B'_i+c_iM'_i)
\] with $A_i$ the strict transform of $A_i'$. Let $g_i$ denote the restriction of $h_i$ to $S_i \to S_i'$, then
\[
A_{S'_i} = {g_i}_*((A_i-S_i)|_{S_i}), \quad B_{S'_i}={g_i}_*(B_i|_{S_i}), \quad M_{S'_i}={g_i}_*(M_i|_{S_i}).
\]
Hence, $M_{S'_i}$ is the push-forward of the nef and Cartier divisor $M_{i}|_{S_i}$. $A_{S'_i}$ is either $0$ or approaching $1$. $B_{S_i'} \in D(I) \cup D_{c_i}(I)$, and by Lemma \ref{le: trivial nef part}, when $M_i|_{S_i} \equiv 0$, at least one coefficients of $B_{S'_i}$ is in $D_{c_i}(I)$. The $(S_i', A_{S'_i}+B_{S'_i}+c_iM_{S'_i})$ is still generalized lc, but $M_{S'_i}$ may not necessarily be $\Qq$-Cartier. By Proposition \ref{prop: dlt}, there exists a $\Qq$-factorial variety $\tilde S_i$ with a birational morphism $\pi_i: \tilde S_i \to S_i'$ such that 
\[
K_{\tilde S_i} + A_{\tilde S_i}+B_{\tilde S_i}+c_iM_{\tilde S_i} = \pi_i^*(K_{S_i'}+A_{S'_i}+B_{S'_i}+c_iM_{S'_i}).
\]
The generalized polarized pair $(\tilde S_i, A_{\tilde S_i}+B_{\tilde S_i}+c_iM_{\tilde S_i})$ satisfies all the assumptions of the proposition with $\dim \tilde S_i=n-1$. Hence by the induction hypothesis, we have $c \in N_{n-2}(I) \subseteq N_{n-1}(I)$.

\medskip

\noindent Step 5. In this step, we deal with $\lfloor A'_i \rfloor = 0$ case. We create a reduced divisor on $X_i'$ and discuss some of its properties.

\medskip

If $\lfloor A'_i \rfloor = 0$, then $(X'_i, \Delta'_i)$ is generalized klt by Step 3. As $c < c_i$, $({X'_i}, A'_i+B'_i+cM'_i)$ is also generalized klt.  Let $T'_i$ be the reduced divisor which is the support of $A'_i$. We claim that by passing to a subsequence, $({X'_i}, T'_i+B'_i+cM'_i)$ is still generalized lc. Otherwise, let $t_i$ be the generalized lc threshold of $T'_i$ with respect to $(X'_i, B'_i+cM'_i)$. Then $t_i<1$. But $t_i$ is greater or equal to the minimal coefficients of $A_i$. As the coefficients of $A_i$ is approaching $1$, $t_i$ is approaching $1$. This contradicts to the ACC for generalized lc thresholds (Theorem \ref{thm: ACC for glct}). By passing to a subsequence, we can assume that $({X'_i}, T'_i+B'_i+cM'_i)$ is generalized lc for each $i$.

\medskip

Now we consider the generalized polarized pair $({X'_i}, T'_i+B'_i+c_iM'_i)$. Until the end of this step, we show that when $({X'_i}, T'_i+B'_i+c_iM'_i)$ is not generalized lc for infinite $i$, the proposition holds. The remaining case when $({X'_i}, T'_i+B'_i+c_iM'_i)$ is generalized lc will be discussed in Step 5.

\medskip

Because $({X'_i}, T'_i+B'_i+c_iM'_i)$ is not generalized lc, $M_i' \not\equiv 0$, and let 
\[
c_i'=\sup\{t \in \Rr \mid K_{X'_i}+T'_i+B'_i+tM'_i\text{~is generalized lc~}\}.
\] Thus $c\leq c_i'<c_i$. Moreover, there exists a generalized lc center $V_i$ which is strictly contained in $M'_i$. By this we mean the following. Suppose that $h_i: X_i \to X_i'$ is a log resolution, then
\[
K_{X_i}+T_i+B_i(t)+tM_i = h_i^*(K_{X'_i}+T'_i+B'_i+tM'_i)
\] with $T_i$ the strict transform of $T_i'$. When $t=c_i'$, there exists at least one exceptional divisor whose coefficient in $B_i(c_i')$ is one and it is a generalized lc place over $V_i$. In particular, they cannot be components of $M_i'$. Besides, in $h_i^*M_i' = M_i+E_i$, we must have $E_i>0$. In fact, $E_i \geq 0$ by the negativity lemma, and if $E_i=0$,  then $({X'_i}, T'_i+B'_i+c_iM'_i)$ is already generalized lc. By the negativity lemma again, there exists a component $S_i$ of $E_i$ which is a covering family of curves $\{C\}$, such that ${h_i}_*C=0$ and $E_i \cdot C<0$. Thus $M_i \cdot C>0$ when $\Supp E_i = S_i$. 

\medskip

By Proposition \ref{lem: dlt for klt}, and notice that $(X',0)$ is klt, we can just extract $S_i$. That is, 
\[
K_{X_i''}+T_i''+B''_i + c_i'M_i''= \phi_i^*(K_{X_i'}+T_i'+B'_i + c_i'M_i'),
\] such that the push-forward of $S_i$ on $X_i''$, $S''_i$, is a component of $\lfloor B_i'' \rfloor$. Moreover, the relative Picard number $\rho(X_i''/X_i')=1$ and $S''_i$ is anti-ample/$X_i'$. On the other hand, we can assume that $X_i \xrightarrow{g_i} X_i'' \xrightarrow{\phi_i} X_i'$ and $h_i = \phi_i \circ g_i$. Thus $M_i'' = {g_i}_*M_i$ satisfies 
\[
\phi_i^*M_i'= {g_i}_*(h_i^*M_i') = M_i''+{g_i}_*E_i = M_i''+a_iS''_i,
\] for some $a_i>0$. We see that $M''_i$ is ample/$X_i'$. In particular, for a general fibre $F''_i$ of $S''_i \to V_i$, $M''_i|_{F''_i}$ is ample. 

\medskip

We have
\begin{equation}\label{eq: numerically trivial}
\begin{split}
& \left((K_{X_i''}+T_i''+B''_i + c_i'M_i'')|_{S''_i}\right)|_{F''_i}\\
 = & \left(\phi_i^*(K_{X_i'}+T_i'+B'_i + c_i'M_i')\right)|_{F''_i}\equiv 0.
\end{split}
\end{equation} By generalized adjunction, we have
\[
(K_{X_i''}+T_i''+B''_i + c_i'M_i'')|_{S''_i} = K_{S''_i}+T_{S''_i}+B_{S''_i}+c_i'M_{S''_i},
\] where $M_{S_i''}$ is the push-forward of $M_i|_{S_i}$. To remedy the problem that $M_{S_i''}$ may not be $\Qq$-Cartier, as Step 4, we can pass to a $\Qq$-factorial model $\tilde S_i$ over $S_i''$ and take the corresponding general fibre. Hence, without loss of generality, we can assume that $M_{S_i''}$ is $\Qq$-Cartier. After doing this, $M''_i|_{F''_i}$ is nef and big. Let $K_{F''_i}, T_{F''_i}, B_{F''_i}, M_{F''_i}$ be the restrictions of $K_{S''_i}, T_{S''_i}, B_{S''_i}, M_{S''_i}$ to $F_i''$ respectively. Then by \eqref{eq: numerically trivial}, we have
\[
K_{F''_i} +T_{F''_i} + B_{F''_i}+c_i' M_{F''_i} \equiv 0.
\] 

We need a detailed analysis on $B_{F''_i}$ and $M_{F''_i}$. Recall that $g_i: X \to X_i''$, and we let $q_i: S_i \to S_i'', \psi_i: F_i \to F_i''$ be the corresponding restrictions of $g_i$, where $F_i$ is the preimage of the general fibre $F_i''$. 

\[
\begin{tikzcd}
F_i \arrow[r] \arrow[d, "\psi_i"]
&S_i \arrow[r] \arrow[d, "q_i"]
& X \arrow[d, "g_i"] \\
F_i'' \arrow[r]
&S_i'' \arrow[r]
& X_i'' \arrow[d, "\phi_i"]\\
& & X_i'
\end{tikzcd}
\]

First, we claim that 
\begin{equation}\label{eq: pushforward}
M_{F''_i}={\psi_i}_*(M_i|_{F_i}).
\end{equation} In fact, by definition ${q_i}_*(M_i|_{S_i}) = M_{S_i''}$, thus $q_i^*(M_{S_i''}) = M_i|_{S_i} + \Theta_i$, where $\Theta_i$ is $q_i$-exceptional. We have
\[
\psi_i^*(M_{F_i''})=q_i^*(M_{S_i''})|_{F_i} = (M_i|_{S_i})|_{F_i} + \Theta_i|_{F_i} = M_i|_{F_i} +  \Theta_i|_{F_i},
\] where $\Theta_i|_{F_i}$ is $\psi_i$-exceptional, and hence the claim.

\medskip

Let $g_i^*M_i''=M_i+R_i$ with $R_i \geq 0$ a $g_i$-exceptional divisor. Then
\begin{equation}\label{eq: R}
g_i^*(M_i''|_{S_i''})=(g_i^*M_i'')|_{S_i}=M_i|_{S_i}+R_i|_{S_i}.
\end{equation} 

(a). When $R_i|_{S_i}$ is an exceptional divisor for $q_i: S_i \to S_i''$, then $R_i|_{F_i}=(R_i|_{S_i})|_{F_i}$ is $\psi_i$-exceptional as $F_i$ is a general fibre. Restricting \eqref{eq: R} to $F_i$, we have
\begin{equation}\label{eq: fibre1}
\psi_i^*(M_i''|_{F_i''})=(g_i^*M_i'')|_{F_i}=M_i|_{F_i}+R_i|_{F_i}.
\end{equation} As $M_i''|_{F_i''}$ is big, by \eqref{eq: pushforward}, $M_{F''_i}={\psi_i}_*(M_i|_{F_i})$ is big.

\medskip

(b). When $R_i|_{S_i}$ is not an exceptional divisor for $q_i: S_i \to S_i''$, write 
\[
R_i|_{S_i} = E_{S_i}+T_{S_i},
\] where $E_{S_i}$ is the summation of exceptional divisors in $R_i|_{S_i}$ and $T_{S_i}$ is the summation of the non-exceptional divisors in $R_i|_{S_i}$. In particular, we have $T_{S_i} \neq 0$. Then, there are two cases to consider: 

\medskip

(b1). Suppose that $T_{S_i}$ is a horizontal divisor over $V_i$, that is, ${\rm Supp}(T_{S_i})$ maps surjectively to $V_i$. Now as $T_{S_i}$ is not an exceptional divisor for $q_i: S_i \to S_i''$, there are summands of $B_{S_i''}$ whose coefficients are of the form $\frac{m-1+f+kc_i'}{m}$ with $k \in \Nn$ (see discussion after equation \eqref{eq: coefficients}). Moreover, as $T_{S_i}$ is a horizontal divisor, there is a component of ${q_i}_*(T_{S_i})$ whose restriction to $F''_i$ is non-zero. Thus, at least one coefficients of $B_{F_i''}$ is in $D_{c_i'}(I)$.

\medskip

(b2). Suppose that $T_{S_i}$ is a vertical divisor over $V_i$, that is, the image of ${\rm Supp}(T_{S_i})$ is not equal to $V_i$. Then $(T_{S_i})|_{F_i}=0$. Because $E_{S_i}$ is an exceptional divisor for $q_i$, $(E_{S_i})|_{F_i}$ is $\psi_i$-exceptional. Put them together, we have $\psi_{i*}(R_i|_{S_i})|_{F_i}=0$. By \eqref{eq: fibre1} and the bigness of $M_i''|_{F_i''}$, $M_{F''_i}={\psi_i}_*(M_i|_{F_i})$ is big. 

\medskip

In summary, the above shows that: either $M_{F''_i}$ is big (in particular, non-zero), or at least one coefficient of $B_{F_i''}$ is in $D_{c_i'}(I)$. The generalized polarized pair $({F''_i}, T_{F''_i}+B_{F''_i}+c_i'M_{F''_i})$ satisfies all the assumptions of Proposition \ref{prop: induction of accumulation point} with $\dim F_i''<n$ except that $\{c_i'\}_{i\in\Nn}$ may not be strictly decreasing.

\medskip

Now, if $c=c_i'$ for some $i$, then 
\[
K_{F''_i}+ T_{F''_i}+B_{F''_i}+cM_{F''_i} \equiv 0.
\] Because  $T_{F''_i}$ is a reduced divisor and $1 \in I$, we have 
\[
({F''_i}, T_{F''_i}+B_{F''_i}+cM_{F''_i}) \in \mathfrak{N}_{\dim F''_i }(I,c),
\] thus $c \in N_{\dim F''_i }(I) \subseteq N_{n-1}(I)$. 

\medskip

We can assume that $c \neq c_i'$ for infinitely many $i$. By passing to a subsequence, $c_i'$ is strictly decreasing to $c$, and we obtain the result by applying the induction hypothesis to $({F''_i}, T_{F''_i}+B_{F''_i}+c_i'M_{F''_i})$.

\medskip

\noindent Step 6. In this step, we show the remaining case of Step 5 when $({X'_i}, T'_i+B'_i+c_iM'_i)$ is generalized lc (see the third paragraph of Step 5) and finish the proof of the whole proposition. 

\medskip

Suppose that $({X'_i}, T'_i+B'_i+c_iM'_i)$ is generalized lc for infinite $i$, and $\rho(X_i')=1$ (see the end of Step 3). 

\medskip

When $K_{X'_i}+T'_i+B'_i+cM'_i \equiv 0$, then let $S'_i$ be the normalization of an irreducible component of $T'_i$. Because $\rho(X'_i)=1$, ${M_i'}|_{S'_i } \not\equiv 0$ (recall that we assume $M_{i}'\not\equiv 0$ at the beginning of the proof). By the generalized adjunction on $S'_i$, we have
\[
K_{S_i'}+ \Theta_{S_i'}+cM_{S_i'} \equiv (K_{X'_i}+T'_i+B'_i+cM'_i )|_{S_i'} \equiv 0.
\] After possibly passing to a $\Qq$-factorial model as before, $(S_i',  \Theta_{S_i'}+cM_{S_i'}) \in \mathfrak N_{n-1}(I,c)$. Thus $c \in N_{n-1}(I)$.

\medskip

When $K_{X'_i}+T'_i+B'_i+cM'_i$ is anti-ample, then there exists $c_i''$ such that $K_{X'_i}+T'_i+B'_i+c_i''M'_i \equiv 0$. By $K_{X'_i}+A'_i+B'_i+c_iM'_i \equiv 0$ and $T_i' \geq A_i'$, we see that $c<c_i'' \leq c_i$. Hence, by passing to a subsequence, $\{c_i''\}_{i\in \Nn}$ is strictly decreases to $c$. Then by the generalized adjunction on $S_i'$, we obtain the result by the induction hypothesis just as the case $\lfloor A_i \rfloor \neq 0$ in Step 4.

\medskip

Finally, suppose that $K_{X'_i}+T'_i+B'_i+cM'_i$ is ample. Because $c<c_i$, $K_{X'_i}+A'_i+B'_i+cM'_i$ is anti-ample, and thus $T_i' \neq A_i'$. Since $\rho(X_i')$=1, there exists $r_i \in (0,1)$, such that 
\[
K_{X'_i}+r_iT'_i+(1-r_i)A_i'+B'_i+cM'_i \equiv 0.
\] Because the coefficients of $A_i$ is approaching $1$ and at least one coefficient is not $1$ (because $T_i' \neq A_i'$), the coefficients of $r_iT'_i+(1-r_i)A_i'$ is approaching $1$ and not all of them equal to $1$. Moreover, $({X'_i}, r_iT'_i+(1-r_i)A_i'+B'_i+cM'_i)$ is generalized lc, and by passing to a subsequence, we can assume that the coefficients of $r_iT'_i+(1-r_i)A_i'+B'_i$ lie in a DCC set. But the coefficients must lie in a finite set by the global ACC of generalized polarized pairs (Theorem \ref{thm: global ACC}). This is a contradiction.
\end{proof}

\begin{lemma}\label{lem: ACC for N}
Suppose that $I \subseteq [0,1]$ is a DCC set, then $N_n(I)$ is an ACC set.
\end{lemma}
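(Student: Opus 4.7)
The plan is to argue by contradiction using the global ACC for generalized polarized pairs (Theorem \ref{thm: global ACC}). Suppose $\{c_i\}_{i \in \Nn} \subseteq N_n(I)$ is strictly increasing. After passing to a subsequence I may assume every pair $(X_i', B_i' + c_i M_i') \in \mathfrak{N}_{n'}(I, c_i)$ has a fixed dimension $n' \leq n$ and $c_i > 0$ for all $i$.

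The first step is to package all relevant coefficients into a single DCC set. Put $\Lambda^* = I \cup \{c_i : i \in \Nn\}$; this is DCC since $I$ is DCC and a strictly increasing sequence admits no infinite strictly decreasing subsequence. Any element of $D(I) \cup D_{c_i}(I)$ has the form $\frac{m-1+g}{m}$ with $g = f + k c_i \in (\Lambda^*)_+$, so every boundary coefficient of every $B_i'$ lies in $D(\Lambda^*)$. Since $D(\cdot)$ preserves the DCC condition (a routine check), the set $\Lambda = \Lambda^* \cup D(\Lambda^*)$ is DCC and contains both the nef-part coefficient $c_i$ and all boundary coefficients of $B_i'$.

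Next I split cases based on whether $M_i \equiv 0$. If $M_i \not\equiv 0$ for infinitely many $i$, pass to that subsequence and apply Theorem \ref{thm: global ACC} to $(X_i', B_i' + c_i M_i')$ with nef data $\mu_1 = c_i \in \Lambda$, $M_1 = M_i$: all hypotheses hold uniformly (the base is a point, $M_i$ is nef Cartier, $K_{X_i'}+B_i'+c_iM_i' \equiv 0$), so there is a finite $\Lambda^0 \subseteq \Lambda$ containing every $c_i$, contradicting strict monotonicity.

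If instead $M_i \equiv 0$ for all large $i$, then $M_i' \equiv 0$ and $(X_i', B_i')$ is an lc pair with $K_{X_i'} + B_i' \equiv 0$; Theorem \ref{thm: global ACC} (with trivial nef part) forces the coefficients of $B_i'$ into a finite subset $F \subseteq D(\Lambda^*)$. Condition $(\dagger)(4)$ provides a distinguished coefficient $b_i = \frac{m_i - 1 + f_i + k_i c_i}{m_i} \in F$ with $k_i \in \Nn$ and $f_i \in I_+$. The inequality $k_i c_i \leq 1 - f_i \leq 1$ together with $c_i \geq c_1 > 0$ bounds $k_i$, and from $m_i(1 - b_i) = 1 - f_i - k_i c_i \leq 1$ the denominator $m_i$ is bounded whenever $b_i < 1$; after further subsequences I may take $b_i = v$, $k_i = k$, $m_i = m$ constant. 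Then $c_i = (m(v-1) + 1 - f_i)/k$ depends linearly on $f_i$, so strict increase of $c_i$ forces strict decrease of $f_i$ inside the DCC set $I_+$, a contradiction; the boundary case $v = 1$ gives $f_i = 1 - k c_i$ with the same conclusion. The only non-routine point is the construction of $\Lambda$, ensuring Theorem \ref{thm: global ACC} applies uniformly across the sequence; once this is in place, both cases close immediately.
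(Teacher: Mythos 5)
Your proposal is correct and follows essentially the same route as the paper: package all coefficients (including the $c_i$ and the elements of $D(I)\cup D_{c_i}(I)$) into a single DCC set, invoke the global ACC theorem, and split on whether the nef part $M_i$ is numerically trivial, using condition $(\dagger)(4)$ in the trivial case. The only cosmetic difference is in that last case: the paper argues the distinguished coefficients $\frac{m_i-1+f_i+k_ic_i}{m_i}$ form an infinite set contradicting the finiteness from global ACC, while you fix the coefficient value and deduce that strict increase of $c_i$ forces strict decrease of $f_i$ in the DCC set $I_+$ -- the two arguments are equivalent.
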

\begin{proof}
Suppose that there exists a strictly increasing sequence $\{c_i \mid c_i \in N_n(I)\}_{i\in \Nn}$. We claim that the set 
\[
\{\frac{m_i-1+f_i+k_ic_i}{m_i} \leq 1 \mid m_i \in \Nn, k_i \in \Zz_{\geq 0}, f_i \in I_+\}
\] is a DCC set. Otherwise, we can assume that $\{\frac{m_i-1+f_i+k_ic_i}{m_i}\}_{i\in \Nn}$ is a strictly decreasing sequence. By passing to a subsequence, we can assume that $\{m_i\}_{i\in\Nn}, \{f_i\}_{i\in\Nn}$ and $\{k_i\}_{i\in\Nn}$ are non-decreasing sequences as $\Nn, \Zz_{\geq 0}$ and $I_+$ are all DCC sets. We can also assume that $\frac{m_i-1+f_i+k_ic_i}{m_i} <1$, and thus $\{m_{i}\}_{i\in \Nn}$ is bounded. By passing to a subsequence again, we can assume that $\{m_{i}\}_{i\in \Nn}$ is a constant sequence and thus $\{f_i+k_ic_i\}_{i\in\Nn}$ is strictly decreasing. But this leads to a contradiction as $\{f_i\}_{i\in\Nn}$ is non-decreasing and $\{c_i\}_{i\in\Nn}$ is strictly increasing (notice that $k_i$ could be $0$). 

\medskip

The set of varieties in $\cup_{i\in\Nn}\mathfrak{N}(I,{c_i})$ have coefficients in a DCC set, and thus by global ACC (Theorem \ref{thm: global ACC}), they must lie in a finite set. This contradicts to the strictly increasing assumption on $\{c_i\}_{i\in \Nn}$. In fact, either there are infinitely $i$ such that $M'_i \not \equiv 0$ and we are done by $K_{X'_i}+B'_i+c_iM'_i \equiv 0$, or $M'_i \equiv 0$,  with $\frac{m_i-1+f_i+k_ic_i}{m_i}, k_i>0$ as a coefficient of $B'_i$ . In the later case, $\{\frac{m_i-1+f_i+k_ic_i}{m_i}\}_{i \in \Nn}$ is an infinite set. Indeed, by passing to a subsequence, we can assume that $\{f_i+k_ic_i\}_{i\in\Nn}$ is strictly increasing. In particular, they are not $1$ by passing to a tail. Then no matter $\{m_i\}_{i\in\Nn}$ is bounded or unbounded, $\{\frac{m_i-1+f_i+k_ic_i}{m_i}\}_{i \in \Nn}$ is an infinite set. 
\end{proof}

\begin{corollary}\label{cor: accumulation point}
Let $I$ be a DCC set such that $I=I_+$. Assume that $1\in I$ with $1$ the only possible accumulation point, then for any $1 \leq k \leq n$, $\lim^k N_n(I) \subseteq N_{n-k}(I)$.
\end{corollary}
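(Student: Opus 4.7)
\medskip

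\noindent\textbf{Proof proposal.} The plan is to induct on $k$. The content of the corollary is concentrated in the case $k=1$, which reduces directly to Proposition~\ref{prop: induction of accumulation point} once a strictly decreasing witnessing sequence has been extracted; the inductive step is then purely formal by monotonicity of $\lim^1$.

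For the base case $k=1$, let $c \in \lim^1 N_n(I)$. Since $N_n(I)$ is ACC by Lemma~\ref{lem: ACC for N}, a standard extraction argument rules out any subsequence of witnesses accumulating at $c$ strictly from below (such a subsequence would contain a strictly increasing infinite subsequence, contradicting ACC). Hence I can extract a strictly decreasing sequence $c_i \searrow c$ with $c_i \in N_n(I)$. By the definition of $N_n(I)$ in \eqref{eq: sets}, each $c_i$ is realized by a generalized polarized pair $(X_i', B_i' + c_iM_i') \in \mathfrak{N}_{m_i}(I, c_i)$ with $m_i \leq n$, and after passing to a further subsequence I may assume $m_i = m$ is constant.

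If $c = 0$, then since $1 \in I \subseteq I_+$, taking $f = 1$ and $m = 1$ in the definition \eqref{eq: N_0} shows $0 \in N_0(I) \subseteq N_{n-1}(I)$, so we are done. Otherwise $c > 0$, and I apply Proposition~\ref{prop: induction of accumulation point} to the sequence $(X_i', B_i' + c_iM_i')$, taking the auxiliary divisor $A_i' = 0$. The hypotheses (1)--(6) of that proposition translate verbatim into the defining condition $(\dagger)$ of $\mathfrak{N}_m(I, c_i)$: hypothesis (4) is vacuous when $A_i' = 0$, while (2), (3), (5), (6) reduce exactly to the items in $(\dagger)$. The conclusion is $c \in N_{m-1}(I) \subseteq N_{n-1}(I)$, finishing the base case.

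For the inductive step, suppose $\lim^k N_n(I) \subseteq N_{n-k}(I)$. Monotonicity of the accumulation-point operator gives
\[
\lim^{k+1} N_n(I) = \lim^1(\lim^k N_n(I)) \subseteq \lim^1 N_{n-k}(I),
\]
and the base case, applied now in dimension $n-k$, yields $\lim^1 N_{n-k}(I) \subseteq N_{n-k-1}(I)$. The only delicate point I expect is bookkeeping: checking that the structural data in $(\dagger)$ (the nef Cartier divisor $M_i$, the $\Qq$-Cartier hypothesis on $M_i'$, the coefficient location $B_i' \in D(I) \cup D_{c_i}(I)$, and the alternative in $(\dagger)$(4)) really does match the six numbered hypotheses of Proposition~\ref{prop: induction of accumulation point}, and handling $c = 0$ separately since that proposition requires $c > 0$. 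No further birational geometry is needed beyond what has been packaged into Proposition~\ref{prop: induction of accumulation point} and Lemma~\ref{lem: ACC for N}.
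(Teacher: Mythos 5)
Your proof is correct and follows essentially the same route as the paper: reduce by induction to the case $k=1$, use Lemma~\ref{lem: ACC for N} to extract a monotone sequence, dispose of $c=0$ via \eqref{eq: N_0}, and feed the sequence into Proposition~\ref{prop: induction of accumulation point} with $A_i'=0$. One small note: the paper's proof of this corollary states that the ACC property gives a ``strictly increasing'' sequence converging to $c$, which is backwards --- since $N_n(I)$ is ACC, any accumulation point is approached from above by a strictly decreasing sequence, exactly as you say and exactly as Proposition~\ref{prop: induction of accumulation point} requires --- so your write-up fixes a minor slip in the published argument, and your explicit reduction to constant dimension $m_i=m$ fills in a bookkeeping detail the paper leaves implicit.
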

\begin{proof}
By induction, it is enough to show that $\lim^1 N_n(I) \subseteq N_{n-1}(I)$. By Lemma \ref{lem: ACC for N}, if $\{c_i\}_{i\in\Nn}$ has an accumulation point $c$, then there is a strictly increasing sequence converging to $c$. Moreover, as $0 \in N_{n-1}(I)$, we can assume that $c > 0$. Then the claim follows from Proposition \ref{prop: induction of accumulation point} by taking $A_i=0$.
\end{proof}

\begin{proof}[Proof of Theorem \ref{thm: accumulation point of pet}]
This follows from Proposition \ref{prop: limit in numerically trivial}, Corollary \ref{cor: accumulation point} and Lemma \ref{le: picard number 1}.
\end{proof}

\subsection{An application}\label{subsection: applications}

We demonstrate an application of Theorem \ref{thm: accumulation point of pet} towards Fujita's spectrum conjecture for large $k$. In practice, as long as $I$ is DCC with $1$ to be the only possible accumulation point, we can always enlarge $I$ so that it satisfies all the assumptions in Theorem \ref{thm: accumulation point of pet}. Notice that this could only enlarge the accumulation points.

\begin{lemma}\label{lem: coefficient set}
If $I \subseteq [0,1]$ is a DCC set with $1$ to be the only possible accumulation point, then $J \coloneqq (I\cup \{1\})_+$ is a DCC set such that $J_+=J$ with $1 \in J$ to be the only possible accumulation point.
\end{lemma}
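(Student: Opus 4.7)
The plan is to verify three claims about $J \coloneqq (I \cup \{1\})_+$ in sequence: (i) $J_+ = J$, (ii) $J$ satisfies DCC, and (iii) $1$ is the only possible accumulation point of $J$. The recurring technical ingredient, used in both (ii) and (iii), will be the elementary observation that a DCC subset of $[0, C]$ whose accumulation points all lie strictly above $C$ is finite, and therefore (if it contains a positive element) admits a strictly positive minimum.

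For (i), I would argue by unwinding definitions. The inclusion $J \subseteq J_+$ is immediate by taking single-term sums. Conversely, if $b_j = \sum_i n_{j,i} a_{j,i} \in J$ with $a_{j,i} \in I \cup \{1\}$ and $n_{j,i} \in \Nn$, then any combination $\sum_j m_j b_j = \sum_{j,i} m_j n_{j,i} a_{j,i}$ is again a non-negative integer combination of elements of $I \cup \{1\}$, so if it is bounded by $1$ it lies in $J$ by definition.

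For (ii), I would start by noting that $(I \cup \{1\}) \setminus \{0\}$ is a non-empty DCC set (it contains $1$) and so has a positive minimum $m > 0$. Consequently every element of $J$ admits an expression $\sum n_i a_i \leq 1$ using at most $\lfloor 1/m \rfloor$ positive summands counted with multiplicity. Given a purported strictly decreasing sequence in $J$, I would discard zero summands and pass to a subsequence along which the number of positive summands and their multiplicities both stabilize, reducing the situation to $j_k = \sum_{i=1}^{r} n_i a_{k,i}$ with $r, n_i$ constant and $a_{k,i} \in I \cup \{1\}$. Passing to yet another subsequence on which each $\{a_{k,i}\}_k$ is monotone (possible since $I \cup \{1\}$ is DCC), every coordinate sequence is non-decreasing, so $j_k$ is non-decreasing, contradicting strict decrease.

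The main step will be (iii), and I expect it to carry the weight of the argument. Suppose $c < 1$ were an accumulation point of $J$; choose $\epsilon > 0$ with $c + \epsilon < 1$. For any $j = \sum n_i a_i \in J \cap [0, c + \epsilon]$ in which every $n_i a_i$ is positive, each $a_i \leq n_i a_i \leq c + \epsilon < 1$. Since $1$ is the only accumulation point of $I \cup \{1\}$, the set $(I \cup \{1\}) \cap [0, c + \epsilon]$ is finite with strictly positive minimum $m'$, so the total multiplicity in any such expression is at most $\lfloor 1/m' \rfloor$. Thus there are only finitely many possible values in $J \cap [0, c + \epsilon]$, contradicting the accumulation at $c$. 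Finally, $1 \in J$ holds by definition, and $1$ can indeed be an accumulation point (e.g.\ via any sequence in $I$ tending to $1$), so the characterization is sharp.
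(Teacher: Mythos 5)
Your proof is correct, and its decisive step is organized differently from the paper's. Both arguments share the same skeleton for parts (i) and (ii) (the paper in fact dismisses $J_+=J$ and the DCC property as immediate, while you spell out the standard bounded-multiplicity plus monotone-subsequence argument), and both exploit the positive minimum of the nonzero elements to bound the number of summands. The difference is in the accumulation-point claim: the paper argues by contradiction along a sequence $c_+^i\to c_+<1$, fixes the number of summands after passing to a subsequence, extracts non-decreasing coordinate subsequences, and obtains a limit $a_k$ with $0<a_k<1$ that would be an accumulation point of $I$ below $1$; you instead prove the stronger local statement that $J\cap[0,c+\epsilon]$ is a finite set for any $c+\epsilon<1$, because $(I\cup\{1\})\cap(0,c+\epsilon]$ is finite (no accumulation point in a compact interval missing $1$) and the total multiplicity is at most $\lfloor 1/m'\rfloor$. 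This finiteness argument is arguably cleaner and in fact also yields the DCC of $J$ directly (a strictly decreasing sequence eventually lies below some level $t<1$, where $J$ is finite), so your separate step (ii) could be absorbed into (iii). One cosmetic slip: if $0\in I$ then the minimum of $(I\cup\{1\})\cap[0,c+\epsilon]$ is $0$; what you need, and what you actually use after restricting to positive summands, is the minimum $m'>0$ of its nonzero elements.
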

\begin{proof}
If $I$ is DCC, then $J$ is DCC and $J_+=J$ by definition. It is enough to show that $1$ is
also the only possible accumulation point of $J$. Otherwise, there exists a sequence $\{c_+^{i}\}_{i\in \Nn}$ of $J$ approaching $c_+<1$. Each $c_+^i = \sum_{j}^{n_i} a_{ij}$, where $a_{ij} \in I$ (repetition is allowed). We claim that $n_i$ is bounded above. Otherwise there exists a subsequence sequence $\{a_{k_i j_{k_i}}\}$ decreasing to $0$ which contradicts to that $I$ is DCC. By passing to a subsequence, we can assume that $n_i=n$ is a fixed number. For each $c_+^i$, there is an $n$-tuple $(a_{i1}, \ldots, a_{in})$ (the order does not matter). By passing to a subsequence again, we can assume that for each $k$, $\{a_{ik}\}_{i\in \Nn}$ is an increasing sequence. Hence there exists $\lim_i a_{ik} = a_k$, and $0<a_k<1$. This is a contradiction.
\end{proof}

\begin{proof}[Proof of Proposition \ref{prop: Fujita's conjecture for n-1}]
By Lemma \ref{lem: coefficient set}, replacing $I$ by $(I\cup\{1\})_+$, we can assume that $I$ satisfies all the assumptions of Theorem \ref{thm: accumulation point of pet}. 

\medskip

For $k=n-1$, $\lim^{n-1} (\PET_n(I)) \subseteq K_{1}(I)$ by Theorem \ref{thm: accumulation point of pet}. It suffices to give an upper bound for $K_{1}(I)$. Let $c \in K_{1}(I)$, by definition in Section \ref{sec: accumulation points}, there exists a smooth curve $X'$, and $B', M'$ such that $K_{X'}+B'+cM' \equiv 0$. There are two cases to consider. If $M' \equiv 0$, then some coefficient of $B'$ lies in $D_c(I)$. This coefficient is of the form 
\begin{equation}\label{eq: coeff of B}
\frac{m-1+f+kc}{m}, \quad m,k\in \Nn \text{~and~} f\in I_+.
\end{equation} By generalized klt assumption, all the coefficients of $B'$ are less than $1$, hence $kc<1$, and thus $c<1$. If $M' \not\equiv 0$, then $M'$ is an ample Cartier divisor, hence $c \leq 2$.

\medskip

For $k=n$, by Theorem \ref{thm: accumulation point of pet} and \eqref{eq: N_0}, 
\[
\lim{}^n(\PET_n(I))\subseteq K_0(I)=\{\frac{1-f}{m} \geq 0 \mid f\in I_+, m\in \Nn \}. 
\] Thus $c \leq 1$ for any $c\in \lim^n(\PET_n(I))$.
\end{proof}

Proposition \ref{prop: Fujita's conjecture for n-1} gives upper bounds for the first and the second accumulation points of surfaces and they are sharp under our conditions.

\begin{remark}
Corollary \ref{cor: accumulation point} shows that a $k$-th accumulation point $c$ lies in $N_{n-k}(I)$. When $n-k>1$, a similar argument as Proposition \ref{prop: Fujita's conjecture for n-1} has the following difficulty: when $M' \not\equiv 0$, $M'$ is known to be an ample Weil divisor which may not necessarily be Cartier. Thus, there is no upper bound for $c$. If one works with $M$ instead of $M'$, then one gains the nef and Cartier property but loses the bigness. However, we are able to overcome such difficulty when $M$  is assumed to be ample and Cartier in $\PET_n(I)$ \eqref{eq: PET_n(I)} (see \cite{Li18}).
\end{remark}

\bibliographystyle{alpha}

\bibliography{bibfile}

\end{document}